\documentclass[11pt]{amsart}
\usepackage{amssymb,amsfonts,geometry}
\geometry{left=2cm,right=2cm,headsep=12pt,top=2.5cm,bottom=2.5cm,a4paper}
\linespread{1.1}
\usepackage{hyperref}
\usepackage{mdwlist}
\usepackage{amssymb,textcomp}
\usepackage{enumitem}
\usepackage[utf8]{inputenc}
\usepackage[T1]{fontenc}
\usepackage{tikz, tikz-cd}

\usetikzlibrary{snakes}
\usetikzlibrary{math}
\usepackage{tikz}
\usepackage{hyperref}
 \hypersetup{
     colorlinks=true,
     linktocpage=true,
     linkcolor=red,
     filecolor=blue,
     citecolor = blue,
     urlcolor=cyan,
     }
\usepackage[capitalize]{cleveref}     

\usetikzlibrary{automata,matrix,positioning}
\usepackage{pst-node}
\usepackage{tikz-cd} 


\usepackage{comment}
\newtheorem{lemma}{Lemma}[section]
\newtheorem{theorem}[lemma]{Theorem}
\newtheorem*{theorem*}{Theorem}

\newtheorem{corollary}[lemma]{Corollary}

\newtheorem*{proposition*}{Proposition}

\newtheorem*{problem*}{Problem}

\theoremstyle{definition}
\newtheorem*{claim*}{Claim}

\newtheorem{definition}{Definition}

\newtheorem{example}{Example}

\newtheorem{remark}{Remark}
\newtheorem{remarks}{Remarks}



\newcommand{\N}{{\mathbb N}}

\renewcommand{\S}{\mathbb{S}}

\newcommand{\Z}{{\mathbb Z}}

\newcommand{\CA}{{\mathcal A}}

\newcommand{\CD}{{\mathcal D}}

\newcommand{\CF}{{\mathcal F}}
\newcommand{\CG}{{\mathcal G}}

\newcommand{\CK}{{\mathcal K}}
\newcommand{\CL}{{\mathcal L}}
\newcommand{\CM}{{\mathcal M}}

\newcommand{\CS}{{\mathcal S}}




\title{A note on the perturbations of subshifts}
\author{Haritha Cheriyath }
\address{Centro de Modelamiento Matemático (CNRS IRL2807)\\Universidad de Chile\\Santiago, Chile}
\email{hcheriyath@cmm.uchile.cl,harithacheriyath@gmail.com}

\thanks{The author was supported by ANID/Fondecyt/3250410 and Centro de Modelamiento Matemático (CMM) FB210005, BASAL funds for centers of excellence from ANID - Chile}

\subjclass[2020]{37B10 (Primary); 37B40 (Secondary)}

\keywords{Coded systems, sofic shifts, subshifts of finite type, entropy, conjugacy of subshifts}

\begin{document}

\begin{abstract}
  In this paper, we consider different classes of subshifts and study their perturbations obtained by forbidding sequences that contain a given word as a subword. 
  We show that the perturbations of sofic shifts are sofic. Though not true for general coded systems, we provide sufficient conditions under which perturbations of synchronized systems remain synchronized. We investigate which systems remain conjugate under perturbation by a word from a coded system.
We also explicitly calculate the drop in entropy under perturbation when the subshift is an $\CS$-gap shift, and obtain its exponential decay as the length of the word increases. In the case of subshifts of finite type, we explore the drop in entropy under multi-word perturbations.
\end{abstract}

\maketitle

\section{Introduction}
In the study of symbolic dynamics, one often considers perturbations of subshifts by forbidding specific words~\cite{CMRW24,Lind,Pavlov11,Ramsey24}. This simple question of deleting a single word from a subshift yields a diverse range of unexpected connections, for instance, in coding theory~\cite{Guibas}, non-transitive games~\cite{Combinatorial,Penney69} or ergodic theory~\cite{Subshift}. Let $X$ be a one-dimensional subshift on finite symbols, and let $w\in\CL(X)$ be a word appearing in the language of $X$. We define the perturbed system associated with $w$ as,
\[
X_w:=\{x\in X\mid x \text{ does not contain } w \text{ as a subword}\}.
\]

Clearly, $X_w\subseteq X$ is itself a subshift. When $X$ is minimal, all perturbations are empty. In the case of non-minimal subshifts, this basic perturbation prompts several natural questions: namely, what guarantees $X_w\ne\emptyset$, how to compute the entropy of $X_w$, how does the entropy depend on the length of $w$, or for two distinct words $w$ and $u$, when is $X_w$ conjugate to $X_u$. This paper investigates these problems, as well as further structural aspects of the perturbations, in the case where $X$ is a coded system. The results in this paper serve as preliminary steps toward addressing many future problems, and they generalize those in~\cite{CMRW24,Lind,Ramsey24}.

Perturbation by one or multiple words has been studied earlier when the ambient space is a \emph{subshift of finite type} (SFT in short), but has not been extended beyond this setting. It is easy to verify that if $X$ is an SFT, so is $X_w$. One can ask under what conditions does a structural property of $X$ persist in the perturbed system $X_w$. We address this problem when $X$ is a \emph{coded system}. 
Coded systems were introduced by Blanchard and Hansel~\cite{BH86} and form an important class of subshifts due to their properties and diverse applications~\cite{EKO19,Fiebig92,Krieger00,Pavlov20}. They are the subshifts represented by countable-state irreducible labeled graphs. Well-known examples of coded systems include irreducible sofic shifts, $\CS$-gap shifts, $\beta$-shifts, and Dyck shifts. We provide examples to show that perturbations of coded systems may not be coded. However, we prove the following result when $X$ is synchronized. 
\begin{theorem}\label{thm:synchro}
     Let $X$ be a synchronized system with a synchronizing word $m$. If $w\in\CL(X)$ is such that $X_w$ is irreducible, $m$ is not a subword of $w$, and $m\in\CL(X_w)$, then $X_w$ is a synchronized system.
\end{theorem}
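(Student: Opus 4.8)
The plan is to show that the synchronizing word $m$ of $X$ continues to synchronize the perturbed system $X_w$. Since $X_w$ is irreducible by hypothesis and $m\in\CL(X_w)$ by hypothesis, it suffices to verify the synchronizing condition inside $X_w$: whenever $um,\,mv\in\CL(X_w)$, I must exhibit a point of $X_w$ in which $umv$ occurs, so that $umv\in\CL(X_w)$. Thus the whole argument reduces to a gluing statement together with a check that the glued object avoids $w$.

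First I would realize the two words in genuine points of the perturbation: pick $x,y\in X_w$ in which $um$ and $mv$ respectively occur. Cutting $x$ immediately after the displayed occurrence of $m$ ending $um$ yields a left-infinite ray of $X$ reading $\cdots um$, and cutting $y$ immediately before the displayed $m$ beginning $mv$ yields a right-infinite ray reading $mv\cdots$. Since $m$ is synchronizing for $X$, any word $smt$ with $sm$ a suffix of the first ray and $mt$ a prefix of the second lies in $\CL(X)$; a routine compactness argument then glues the two rays along the shared copy of $m$ into a bi-infinite point $z\in X$ containing $umv$ at the junction. By construction the portion of $z$ up to and including the central $m$ is a window of $x$, and the portion from the central $m$ onward is a window of $y$.

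The crux is to check that $z$ avoids $w$, for then $z\in X_w$ and $umv\in\CL(X_w)$ as required. Suppose $w$ occurred in $z$, locate the central block $m$ at positions $a+1,\dots,a+|m|$, and locate the occurrence of $w$ at positions $i,\dots,i+|w|-1$. If this occurrence ends at or before position $a+|m|$ it lies in the left ray, hence inside $x$, which is impossible since $x\in X_w$; if it begins at or after position $a+1$ it lies in the right ray, hence inside $y$, again impossible. The only remaining case is $i\le a$ and $i+|w|-1\ge a+|m|+1$, in which the occurrence of $w$ entirely contains positions $a+1,\dots,a+|m|$, i.e. $m$ is a subword of $w$, contradicting the hypothesis. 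Hence $z\in X_w$, the synchronizing condition holds, and $X_w$ is synchronized. The main obstacle is exactly this straddling case, and it is precisely where the assumption that $m$ is not a subword of $w$ is indispensable; the hypothesis $m\in\CL(X_w)$ guarantees that $m$ is a genuine word of the perturbation (so the synchronizing word is nontrivial), and the irreducibility of $X_w$ supplies the remaining clause in the definition of a synchronized system.
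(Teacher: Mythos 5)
Your proof is correct and takes essentially the same route as the paper's: both arguments show that $m$ itself remains synchronizing for $X_w$ by gluing the two sides along the shared copy of $m$ (using the synchronizing property in $X$ together with compactness), and both hinge on the key observation that an occurrence of $w$ straddling the junction would have to contain $m$ as a subword, contradicting $m\not\prec w$. The only cosmetic difference is that the paper glues the growing finite words $u_num vv_n\in\CL(X)$ and passes to a limit point, whereas you glue two infinite rays directly.
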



Since the sets of admissible words of length $n$ satisfy $\CL_n(X_w)\subseteq\CL_n(X)$ for all $n\in\N$, the corresponding topological entropies satisfy $h(X_w)\le h(X)$.
From \cite{Lind}, for an SFT $X$ with entropy $\ln(\lambda)$, the difference $h(X)-h(X_w)$ decays exponentially in length $n=|w|$. 
A similar exponential bound for multidimensional SFT is obtained in~\cite{Pavlov11}.
However, this behavior does not necessarily hold for general subshifts.
To illustrate, consider minimal subshifts $X$ and $Y$ on distinct alphabets with $h(X)<h(Y)$. Then, $Z:=X\sqcup Y$ is non-minimal and $h(Z)=h(Y)$. For any word $w\in\CL(Y)$, the perturbed subshift satisfies $h(Z_w)=h(X)$, regardless of $|w|$. Although $Z$ is not irreducible, it demonstrates that convergence of entropies can fail in general settings.
This motivates the broader question of characterizing those subshifts $X$ for which $h(X_w)\to h(X)$ as $|w|\to\infty$, and further, identifying when this convergence occurs at an exponential rate. 
We prove the following for the perturbations of sofic shifts. 
\begin{theorem}\label{thm:sofic_entropy}
    Let $X$ be an irreducible sofic shift. There exist $C(X),N(X)>0$ such that for any $w\in\CL(X)$ with $|w|=n>N(X)$, 
    \[
    h(X)-h(X_w)\le \frac{C(X)}{n}.
    \]  In particular, $h(X_w)$ converges to $h(X)$ as $|w|\to\infty$.
\end{theorem}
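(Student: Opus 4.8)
The plan is to reduce the statement to a counting estimate and then manufacture many long $w$-avoiding words by a gluing construction. Since $X$ is an irreducible sofic shift, its perturbation $X_w$ is again sofic (by the earlier theorem that perturbations of sofic shifts are sofic), so $h(X_w)=\lim_{N}\frac1N\ln|\CL_N(X_w)|$ and it suffices to give a good lower bound on $|\CL_N(X_w)|$. I will use three structural facts about $X$: writing $\lambda=e^{h(X)}$ and $a_N=|\CL_N(X)|$, one has $a_N\asymp\lambda^N$ (up to the period of the Fischer cover); $X$ is synchronized, and for each $\ell$ the synchronizing words of length $\ell$ number at least $c_s\lambda^{\ell}$; and reading a fixed word $w$ in the right-resolving Fischer cover is described by a $0/1$ matrix with at most one nonzero entry per row, so (summing over the at most $M$ possible positions of $w$ and estimating each by powers of the adjacency matrix $A$ of the cover) the number of length-$M$ words of $X$ that contain $w$ is at most $C_1\,M\,\lambda^{M-n}$.

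First I fix scales. Choose $\ell=\ell(n)\asymp\log_\lambda n$, small enough that $c_s\lambda^\ell>n$. Because $w$ has at most $n$ distinct subwords of length $\ell$ while there are more than $n$ synchronizing words of that length, I may pick a synchronizing word $p$ with $|p|=\ell$ that is \emph{not} a subword of $w$. Next choose a block length $M\asymp n\log n$, so that $\ell/M\asymp 1/n$ while still $C_1M\le\tfrac12 c_2\lambda^{\,n-2\ell}$ for all $n>N(X)$ (this is where the threshold $N(X)$ enters). Let $\CB$ be the set of words of length $M$ in $X$ that begin and end with $p$ and contain no occurrence of $w$.

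Now I glue. Given $\gamma_1,\gamma_2\in\CB$ I overlap them on their common marker $p$, forming a word with a single shared copy of $p$ at the junction; synchronization of $p$ guarantees the result lies in $\CL(X)$, and iterating produces a word of length $k(M-\ell)+\ell$ for every $k$. Such a word avoids $w$: an occurrence of $w$ either lies inside one block (excluded by the definition of $\CB$) or crosses a junction, in which case its window of length $n>\ell$ contains the entire shared copy of $p$ and so cannot equal $w$, since $p$ is not a subword of $w$. Hence $|\CL_{k(M-\ell)+\ell}(X_w)|\ge|\CB|^{k}$, and letting $k\to\infty$ gives $h(X_w)\ge \ln|\CB|/(M-\ell)$. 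It remains to bound $|\CB|$ below: the number of length-$M$ words of $X$ beginning and ending with $p$ is at least $c_2\lambda^{M-2\ell}$, and by the first-moment estimate at most $C_1M\lambda^{M-n}$ of them contain $w$; the choice of $M$ makes the latter at most half of the former, so $|\CB|\ge\tfrac12 c_2\lambda^{M-2\ell}$. Substituting yields $h(X)-h(X_w)\le (\ell\ln\lambda+O(1))/(M-\ell)$, which is $\le C(X)/n$ by the choices $\ell\asymp\log_\lambda n$ and $M\asymp n\log n$.

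The main obstacle is precisely the cross-boundary (straddling) occurrences of $w$, and the subtle point is that no separator of bounded length can be guaranteed to defeat them for an adversarial $w$, because $w$ may contain every short synchronizing word as a subword; this forces the marker $p$ to have length of order $\log n$, which in turn forces $M$ of order $n\log n$ and is exactly what produces the rate $1/n$. The remaining care is quantitative: one must keep $M$ well below the scale $\lambda^{n}$ so that the first-moment bound still leaves a positive proportion of $w$-free blocks, and this is what pins down $N(X)$. (The same scheme with $M$ pushed up to order $\lambda^{n}/\mathrm{poly}(n)$ would in fact give exponential decay, but the bound $C(X)/n$ already yields the asserted convergence $h(X_w)\to h(X)$.)
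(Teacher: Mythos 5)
Your argument is correct in substance, but it takes a genuinely different route from the paper. The paper's proof is a two-line reduction: pass to an irreducible right-resolving presentation $\CG=(G,\ell)$ with $h(X)=h(X_G)$, observe that $X_w$ is a bounded-to-one factor of the SFT $Y_{u_1,\dots,u_s}$ obtained by forbidding the finitely many preimages $u_i\in\ell^{-1}(w)$ (all of length $n$), so $h(X_w)=h(Y_{u_1,\dots,u_s})$, and then quote the multi-word linear bound for irreducible SFTs (\cref{thm:multi_SFTLinear}). You instead work directly at the level of the sofic shift and essentially reprove that cited bound from scratch: a logarithmic-length synchronizing marker $p$ avoiding $w$, a first-moment count of the $w$-containing blocks, and a gluing/injectivity argument; your handling of the straddling occurrences (any occurrence of $w$ not contained in a single block must swallow an entire junction copy of $p$) is the right way to close the one genuinely delicate step. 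What each approach buys: the paper's is shorter and modular, while yours is self-contained and, as you note, immediately improvable --- pushing $M$ up toward $\lambda^{n}/\mathrm{poly}(n)$ in your scheme yields a near-exponential rate, which is exactly what the paper says is still missing from its route (\cref{rem:SFT_Exp}). A careful write-up of your version would still need to justify three standard but nontrivial counting facts: that the number of synchronizing words of length $\ell$ is at least $c_s\lambda^{\ell}$ (follow paths out of the focusing vertex of a fixed synchronizing word in the Fischer cover), that the number of length-$M$ words beginning and ending with $p$ is at least $c_2\lambda^{M-2\ell}$ (this requires uniform transition lengths and a word about the period of the cover), and the degenerate case $h(X)=0$, where $X$ is a single periodic orbit and $X_w$ may be empty.
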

Irreducible sofic shifts form a special class of coded systems that are also natural generalization of SFTs. It was introduced by Weiss~\cite{Weiss73} and is the smallest collection of subshifts that includes SFTs and is closed under taking factors. In automata theory, they are the subshifts represented by finite-state automata.
In~\cref{thm:sofic}, we prove that perturbations of sofic shifts are sofic.
Since irreducible sofic shifts can be well approximated by irreducible SFTs, studying perturbations of sofic shifts essentially reduces to studying those of SFTs. 
We also study this convergence problem when $X$ is an $\CS$-gap shift. 

\begin{theorem}\label{thm:Sgap_exp}
Let $\CS\subseteq \N_0$ and $X=X_{\CS}$ be the $\CS$-gap shift with entropy $\ln(\lambda_{\CS})$. Then for $w\in \CL_n(X)$ where $w\ne 0u0$ for some word $u$ containing 1, we have
     $h(X)-h(X_w)=O(\lambda_{\CS}^{-n})$ for $n$ large enough.
\end{theorem}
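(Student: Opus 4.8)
The plan is to compute the exponential growth rate of $X_w$ through the renewal generating function of the $\CS$-gap shift and to locate its dominant singularity. Writing $g(x)=\sum_{s\in\CS}x^{s+1}$ and $x_\ast=\lambda_{\CS}^{-1}$, the words of $X$ that begin and end with $1$ are enumerated by $F(x)=x/(1-g(x))$, whose radius of convergence is the root $x_\ast$ of $g(x_\ast)=1$; since $\lambda_{\CS}>1$ we have $g'(x_\ast)=\sum_{s\in\CS}(s+1)x_\ast^{\,s}\in(0,\infty)$ (the mean return length). Let $F_w(x)=N(x)/D_w(x)$ be the analogous generating function counting the $1$-anchored words of $X$ that avoid $w$, with dominant singularity $x_w=\lambda_w^{-1}$, where $\ln\lambda_w=h(X_w)$. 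Because $h(X)-h(X_w)=\ln(x_w/x_\ast)$ and $x\mapsto\ln x$ is Lipschitz near $x_\ast$, it suffices to prove $x_w-x_\ast=O(\lambda_{\CS}^{-n})$.

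First I would dispose of the all-zero word $w=0^n$ (which the hypothesis permits): forbidding it simply deletes from $\CS$ every gap $s\ge n$, so $x_w$ solves $\sum_{s\in\CS,\,s<n}x_w^{\,s+1}=1$; the deleted mass is the geometric tail $\sum_{s\ge n}x_\ast^{\,s+1}=O(\lambda_{\CS}^{-n})$, and dividing by $g'(x_\ast)$ gives $x_w-x_\ast=O(\lambda_{\CS}^{-n})$. For the remaining words I use the hypothesis $w\ne 0u0$: up to reversal (which preserves $X$, its entropy, and the class of forbidden words) I may assume $w$ begins with $1$. Decomposing $w=1\,0^{c_1}1\cdots$ along its $1$'s, an occurrence of $w$ inside a point of $X$ is pinned to a genuine $1$ at its left end, hence is an occurrence of the block pattern $\mathbf c=(c_1,c_2,\dots)$ in the sequence of gaps, \emph{closed} on the left. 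If $w$ also ends with $1$ the pattern is closed on the right as well; if $w$ ends with $0$, say $w=\tilde w\,0^{b}$ with $\tilde w$ ending in $1$, the right end is \emph{open}, i.e. forbidding $w$ forbids the finite pattern of $\tilde w$ followed by any gap $s\ge b$. The point of excluding $0u0$ is exactly that it leaves at most one open gap-end, which is what keeps the overlap bookkeeping under control.

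With forbidding $w$ thus expressed as avoidance of a (finite, or singly open-ended) pattern in the renewal structure, I would apply the cluster method of Goulden and Jackson, whose alphabet is the set of blocks $\{0^s1:s\in\CS\}$ with letter weights $x^{s+1}$ and total weight $g(x)$. This yields $D_w(x)=\bigl(1-g(x)\bigr)-C(x)$, where $C(x)$ is the signed cluster generating function recording the self-overlaps of $w$. Evaluating at the unperturbed singularity and using $g(x_\ast)=1$ gives the clean identity $D_w(x_\ast)=-C(x_\ast)$, so everything reduces to the estimate $C(x_\ast)=O(\lambda_{\CS}^{-n})$. The minimal cluster is a single occurrence of $w$; its weight is the product of the spanning block weights, equal to $x_\ast^{\,n-1}$ when $w$ begins and ends with $1$, and in the singly open case carrying the convergent tail $\sum_{s\ge b}x_\ast^{\,s+1}=O(\lambda_{\CS}^{-b})$ in place of the last block, in either case of order $\lambda_{\CS}^{-(n-1)}=\Theta(\lambda_{\CS}^{-n})$. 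Longer clusters arise from self-overlaps of $w$ and are summed by its autocorrelation; since at $x_\ast\in(0,1)$ this correlation factor lies in the bounded range $[1,(1-x_\ast)^{-1}]$, the overlaps only rescale the constant and leave the order unchanged, giving $C(x_\ast)=O(\lambda_{\CS}^{-n})$.

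Finally I would perturb the singularity. As $g$ is real-analytic and strictly increasing on $[0,x_\ast]$ with $g'(x_\ast)\in(0,\infty)$, and as the cluster series defines $C$ analytically on a fixed neighbourhood of $x_\ast$ with $C=O(\lambda_{\CS}^{-n})$ there, the equation $g(x_w)=1-C(x_w)$ has its root at distance $x_w-x_\ast=O(\lambda_{\CS}^{-n})$ from $x_\ast$ by the inverse function theorem. Hence $h(X)-h(X_w)=\ln(x_w/x_\ast)=O(\lambda_{\CS}^{-n})$. I expect the genuine obstacle to be the cluster estimate of the previous paragraph: one must verify that the self-overlaps of $w$—and, in the singly open case, the interaction of an overlap with the open tail $\sum_{s\ge b}$—cannot accumulate beyond order $\lambda_{\CS}^{-n}$. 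This is precisely where the hypothesis $w\ne 0u0$ enters, since a word padded with zeros on both ends would present two open gap-ends whose overlaps are no longer controlled by a single bounded correlation factor.
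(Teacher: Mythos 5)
Your strategy is sound and is, at its core, the same as the paper's: both arguments realize $h(X_w)$ as the dominant root of a renewal-type generating function whose denominator is the unperturbed factor $1-g(x)$ (equivalently $1-T_{\CS}(z)$ in the variable $z=1/x$) plus a perturbation of size $O(\lambda_{\CS}^{-n})$ controlled by the autocorrelation of $w$, and then translate the $O(\lambda_{\CS}^{-n})$ displacement of that root into the entropy bound. The difference is in how the denominator is produced. The paper derives it by explicit recurrences on the counts $f,f_{\CS},f_{\CS_m},g_w$ in~\cref{thm:S_gap}, arriving at $f_w(z)=(1-T_{\CS}(z))c_{\tilde w}(z)+T_{\CS_m}(z)$, and then uses the elementary bounds $c_{\tilde w}(z)\ge z^{n-1}$ and $T_{\CS_m}(z)\le \frac{1}{z-1}$; you obtain the same object via Goulden--Jackson clusters over the block alphabet $\{0^s1\mid s\in\CS\}$, with your $-C(x)$ playing the role of $T_{\CS_m}(z)/c_{\tilde w}(z)$ and your bound $A(x_*)\ge 1$ playing the role of $c_{\tilde w}(z)\ge z^{n-1}$. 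Working in the variable $x=1/z$ is a small but genuine simplification: you can estimate everything directly at $x_*$ and avoid the paper's mean-value-theorem step converting $z_n^{-n}$ into $\lambda_{\CS}^{-n}$. Your treatment of $w=\overline{0}^k$ and the reduction by reversal also match the paper's.

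Two steps you defer are precisely the nontrivial content of the paper's Section~\ref{sec:S-gap} and should not be treated as routine. First, the cluster/overlap bookkeeping for the open-ended pattern (a word ending in $0^b$ forbids the infinite family $\tilde w\,\overline{0}^s1$ with $s\ge b$, together with sequences in which $\tilde w$ is followed by an infinite string of zeros); in the paper this is the recurrence $f_{\CS_m}(n)+1-\chi_{\CS_m}(n)=\sum_{t\in(w,w)}g_w(n+t)$, and it is exactly where the hypothesis $w\ne 0u0$ is consumed, as you correctly anticipate. Second, and this one you do not mention at all: you must check that the smallest positive root of $D_w$ is genuinely a singularity of $F_w$, i.e., that the numerator $N(x)$ does not cancel it. Without this, the root of $D_w$ only upper-bounds the radius of convergence in the wrong direction: you would get an upper bound on $h(X_w)$, hence a lower bound on $h(X)-h(X_w)$, which is not what the theorem asserts. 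The paper proves this non-cancellation in the second half of~\cref{cor:S_gap}, by showing that a common root $z_0>1$ of the denominator and numerator would force $c_{\tilde w}(z_0)<1$, which is impossible. With those two items filled in, your argument closes and yields the stated bound.
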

The class of $\CS$-gap shifts is an important subclass of coded systems, with applications in computer science and coding theory~\cite{DJ12,LM_book}.
In~\cref{thm:S_gap}, we explicitly compute the drop in entropy of systems perturbed from $\CS$-gap shifts which is used to prove the exponential decay stated above. 
Currently, the case where $w$ begins and ends with $0$ remains unresolved, except when $\CS=\{kd\mid k\in\N_0\}$ for some $d\ge 1$ (this case is discussed separately in Section~\ref{subsec:d-gap}). 

Another aspect of interest concerns the dynamical equivalence of different perturbations. Specifically, given two words $u,w\in\CL_n(X)$, when are the perturbed subshifts $X_u$ and $X_w$ conjugate? A necessary condition would be that they both have the same entropy. 
The correlation polynomial of a word determines the overlap of the word with itself. 
It has been proven to be a fundamental object when it comes to computing the entropy of the system perturbed from an SFT (\cref{eq:SFT_one}).
When the ambient system is full shift, the entropies of the perturbed systems are the same if and only if their corresponding correlation polynomials are the same. Hence, a necessary condition for two perturbed systems to be conjugate is that they share the same correlation polynomial. A word $w$ is called \emph{prime} if its correlation polynomial is given by $z^{|w|-1}$. In~\cite[Theorem 4.3]{CMRW24}, it was proved that when $X$ is a full shift, perturbations by prime words are conjugate.
When $X$ is coded, along with the correlations, it also depends on the graph that represents $X$. 
The following result generalizes~\cite[Propositions 4.11 \& 4.13]{CMRW24} where $X$ is a full shift or a golden mean shift, respectively.  
Let $\CG$ be a labeled graph representing $X$. We define $S_{\CG}(w)$ (respectively, $R_{\CG}(w)$) as the set of initial (respectively, terminal) vertices of all finite walks labeled $w$ in $\CG$ (defined explicitly in~\cref{sec:coded}). We refer to~\cref{def:corr} for the notations on the correlation. 

\begin{theorem}\label{thm:conjugacy}
     Let $X$ be a coded system, and $u,w\in\CL(X)$ be two words of the same length, say $n$, such that $(u,u)=(w,w)$ and $(u,w)=(w,u)=(u,u)\setminus\{n-1\}$. If there exists a presentation $\CG$ of $X$ such that $S_{\CG}(u)=S_{\CG}(w)$ and $R_{\CG}(u)=R_{\CG}(w)$, then $X_u$ and $X_w$ are conjugates. In particular, $h(X_u)=h(X_w)$.
\end{theorem}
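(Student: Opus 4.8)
The plan is to construct an explicit conjugacy by swapping the two forbidden words. I would define $\phi\colon X_u\to X_w$ to be the map that replaces every occurrence of $w$ by $u$, and $\psi\colon X_w\to X_u$ to be the symmetric map replacing every occurrence of $u$ by $w$; the goal is to show that these are mutually inverse sliding block codes commuting with the shift. Since a point $x\in X_u$ contains no copy of $u$ (but may contain $w$), while a point of $X_w$ contains no copy of $w$ (but may contain $u$), this is the only sensible direction for the swap. Once $\phi$ and $\psi$ are shown to be shift-commuting homeomorphisms inverse to each other, $X_u$ and $X_w$ are conjugate, and in particular $h(X_u)=h(X_w)$.

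First I would check that $\phi$ is a genuine sliding block code with window $2n-1$. Recall the convention under which $k\in(u,u)$ with $k<n-1$ records a border of $u$ of length $k+1$, i.e. a self-overlap, while $n-1$ records the trivial full overlap. For $x\in X_u$, the symbol $\phi(x)_i$ is declared to be $u_{j+1}$ whenever position $i$ lies in an occurrence of $w$ beginning at $i-j$, and $x_i$ otherwise; detecting this only requires reading $x$ on $[i-n+1,\,i+n-1]$. The one thing to verify is \emph{consistency}: if $i$ is covered by two overlapping occurrences of $w$ beginning at $p_1<p_2$, the two prescriptions $u_{i-p_1+1}$ and $u_{i-p_2+1}$ must agree. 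The shift $s=p_2-p_1$ forces the border condition $n-s-1\in(w,w)$; since $(u,u)=(w,w)$ by hypothesis, $u$ carries the same border, and a direct index check shows the two values coincide. Thus $\phi$ is well defined, and by the same argument so is $\psi$.

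Next I would prove the two facts that make $\phi$ land in $X_w$ and be invertible. (i) \emph{No new forbidden word is created.} I would show that $\phi(x)$ contains no occurrence of $w$ and that the occurrences of $u$ in $\phi(x)$ sit exactly where $x$ had an occurrence of $w$. Any occurrence of $w$, or any spurious occurrence of $u$, in $\phi(x)$ would have to straddle the boundary between a region overwritten by $u$ and an untouched region; the way such a word overlaps the adjacent $u$-block is measured precisely by the cross-correlations $(u,w)$ and $(w,u)$. The hypothesis $(u,w)=(w,u)=(u,u)\setminus\{n-1\}$ forces these boundary overlaps to mirror the self-overlaps of $u$ exactly, which lets me trace any such straddling occurrence back to an occurrence of $w$, or a forced occurrence of $u$, already present in $x$, contradicting $x\in X_u$ and the bookkeeping of replaced positions. (ii) \emph{The output stays in $X$.} Writing $x$ as the label of a bi-infinite walk in the presentation $\CG$, each maximal cluster of overlapping occurrences of $w$ is read along a walk-segment whose endpoint data lie in $S_{\CG}(w)$ and $R_{\CG}(w)$; since $S_{\CG}(u)=S_{\CG}(w)$ and $R_{\CG}(u)=R_{\CG}(w)$, I can reroute each such segment along a walk labeled $u$ with the same endpoints, leaving the rest of the walk intact, so $\phi(x)$ is again the label of a bi-infinite walk and hence lies in $X$. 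Together (i) and (ii) give $\phi(x)\in X_w$.

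Finally, $\phi$ and $\psi$ are continuous and shift-equivariant because they are sliding block codes, and they are mutually inverse: by (i) the $u$-occurrences of $\phi(x)$ are exactly the former $w$-occurrences of $x$, so applying $\psi$ restores $x$, and symmetrically $\phi\circ\psi=\mathrm{id}_{X_w}$. This exhibits the desired conjugacy. I expect the main obstacle to be step (i): controlling occurrences that straddle the boundary of a replaced block and, when $w$ is self-overlapping so that its occurrences come in long clusters, verifying simultaneously that the rerouting in (ii) is consistent across an entire cluster with matching intermediate vertices. Both difficulties are exactly what the correlation hypotheses and the equalities $S_{\CG}(u)=S_{\CG}(w)$, $R_{\CG}(u)=R_{\CG}(w)$ are designed to resolve, so the crux is to package them into a clean \emph{local interchangeability of $u$ and $w$} statement and apply it uniformly along the sequence.
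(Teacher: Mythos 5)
Your proposal takes essentially the same route as the paper's proof: an explicit swap conjugacy realized as a sliding block code of window $2n-1$, with well-definedness on overlapping occurrences coming from the correlation hypotheses and admissibility of the rewritten sequence coming from $S_{\CG}(u)=S_{\CG}(w)$ and $R_{\CG}(u)=R_{\CG}(w)$. The only cosmetic difference is that the paper packages your $\phi$ and $\psi$ as a single involution on $\CA^{\Z}$ exchanging $u$ and $w$ simultaneously, and it makes explicit the limit-point argument for points of the coded system that are not themselves labels of bi-infinite walks.
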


Instead of perturbing a subshift with a single word, one can also ask the above questions on perturbation by multiple words. Let $\CK$ be a finite collection of allowed words in $X$ and let $X_\CK$ denote the collection of sequences from $X$ that do not contain words from $\CK$ as subwords. This multi-word perturbation of SFT was explored in~\cite{Ramsey24}. By studying the complexity function ($n\mapsto \#\CL_n(X_{\CK})$), the author obtained a bound on the entropy that is linear in the length of the shortest word. 
We use this bound in order to prove~\cref{thm:sofic_entropy}.
However, an exponential bound as in~\cite{Lind} is not yet obtained. In~\cref{thm:SFT_Multi}, we calculate the entropy of the system perturbed by multiple words. This expression demonstrates exponential entropy decay in the perturbed system (though we do not prove it explicitly). \emph{As a special case, when $X$ is a full shift on $N$ symbols, in~\cref{thm:Exp_decay}, we calculate the convergence of $\dfrac{h(X_{\CK_n})-h(X)}{N^{-n}}$ as $n\to\infty$ where $\CK_n$ is a finite collection of words of length $n$, which improves the result in~\cite{Ramsey24}.} We also give an application of this convergence in computing the escape rate into a shrinking disconnected hole. 

\section{Preliminaries and existing results}
\subsection{Symbolic dynamical systems}
 Let $\CA$ be a finite alphabet. A \emph{word} is a finite sequence (string) with symbols from $\CA$.
 If $w=w_1\dots w_n$ is a word, then $|w|=n$ denotes the length of $w$. 
 For two words $u,w$, we say that $u$ is a \emph{subword} of $w$, denoted as $u\prec w$, if there exist two words $t,v$ (possibly empty) such that $w=tuv$. 
 
 Let $\CA^{\Z}$ denote the collection of all bi-infinite sequences with symbols from $\CA$.  It is a compact metric space with respect to the pro-discrete topology. For $x,y\in\CA^{\Z},$ we define the metric, $d(x,y)=2^{-k}$ where $k=\min\{|i|: \ x_i\ne y_i\}$.
 Define the \emph{left shift map} $S:\CA^\Z\to \CA^\Z$ as $(S(x))_i=x_{i+1}$ where $x=(x_i)_{i\in\Z}$. A \emph{subshift} $X\subseteq \CA^\Z$ is a closed $S$-invariant subset, with the induced topology. When $X=\CA^\Z$ we call it a \emph{full shift}. 
 
 For a given subshift $X$, let $x=(x_i)_{i\in\Z}\in X$. For $i\le j$, we denote $x_{[i,j]}=x_ix_{i+1}\dots x_{j-1}x_j$. 
 A word $w$ with symbols from $\CA$ is said to be \emph{allowed} in $X$ if $w$ is a subword of a sequence in $X$, that is, there exists a sequence $x=(x_i)_{i\in\Z}\in X$ and $i\le j$ such that $w=x_{[i,j]}$. For $n\ge 1$, let $\mathcal{L}_n(X)$ denote the collection of all allowed words of length $n$ in $X$, and let $\CL_0(X)$ consist of the empty word. We define $\mathcal{L}(X)=\bigcup_{n\ge 0}\mathcal{L}_n(X)$ as the \emph{language} of $X$. Recall that the \emph{topological entropy} of a subshift $X$ is given by,
  \[
  h(X)=\lim_{n\to\infty}\frac{1}{n}\ln\#(\mathcal{L}_n(X)),
  \]
where $\#(.)$ denotes the cardinality of a set. The subshift $X$ is said to be \emph{minimal} if for a given $x\in X$, the orbit $\{S^nx\mid n\in\Z\}$ is dense in $X$. The subshift $X$ is said to be \emph{irreducible}, if for any $(u,w)\in\CL(X)^2$, there exists $v\in\CL(X)$ such that $uvw\in\CL(X)$. If $X$ is not irreducible, it is called \emph{reducible}.

Let $X,Y$ be two subshifts. We say that $X$ is \emph{conjugate} to $Y$ if there exists a homeomorphism $\phi:X\to Y$ such that $\phi\circ S=S\circ\phi$. In this case, the map $\phi$ is called a \emph{conjugacy map} from $X$ to $Y$. For $X,Y$ a \emph{sliding block code} $\psi:X\to Y$ is defined by some local rule as $y_i=\Psi(x_{[i-n,i+m]})$, for $i\in\Z$ where $\Psi:\CL_{n+m+1}(X)\to \CL_1(Y)$ is a block code for some fixed $m,n\ge 0$. A conjugacy between two subshifts is always a bijective sliding block code~\cite{LM_book}. When the sliding block code is surjective, $Y$ is a \emph{factor} of $X$.

Let $\CF$ be a finite collection of words with symbols from $\CA$. Define a subshift,
\[
X_{\CF}:=\{x\in\CA^{\Z}\mid x \text{ does not contain words from $\CF$ as subwords}\}.
\] 
A subshift $X$ is said to be a \emph{subshift of finite type (SFT)} if there exists a finite set $\CF$ of words of finite length with symbols from $\CA$, such that $X=X_{\CF}$. For example, when $\CA=\{0,1\}$ and $\CF=\{11\}$, the SFT $X_{\CF}$ that we obtain is the golden mean shift.

Let $G=(V,E)$ be a directed graph. 
We define an \emph{edge shift} on $G$ as,
\[
X_G=\{e\in E^{\Z}\mid r(e_i)=s(e_{i+1})\text{ for every } i\in\Z\}, 
\] where $s:E\to V$ and $r:E\to V$ are the source and the range map of edges. Clearly, $X_G$ is a subshift of finite type with the forbidden collection $\CF=\{ee'\mid e,e'\in E \text{ and } r(e)\ne s(e')\}$. Using the higher block code representation, it is not difficult to see that any given subshift of finite type is conjugate to an edge shift~\cite{LM_book}.  
Let $A$ be an irreducible matrix\footnote{That is, for every $i,j$ there exists $n$ such that $A^n_{i,j}>0$.} of non-negative integer entries and let $G_A$ be a directed graph such that the associated adjacency matrix is $A$. We denote $X_A$ to be the edge shift associated with $G_A$. It is well-known that $h(X_A)=\ln(\lambda_A)$ where $\lambda_A$ is the Perron root of $A$. 

A graph $G=(V,E)$ is \emph{irreducible} if for any pair of vertices $(u,v)\in V^2$, there is a walk from $u$ to $v$. 
The adjacency matrix of an irreducible graph is irreducible. 

\subsection{Perturbations of subshifts of finite type}

Let $G$ be a finite irreducible directed graph with adjacency matrix $A$, $X=X_A$ and $w\in\CL(X)$. We denote the perturbation of $X$ by $w$, denoted as $X_w$, to be the collection of all sequences in $X$ that do not contain $w$ as a subword. Clearly, $X_w\subseteq X$ is a subshift of finite type. From~\cite{Lind} we have the following expression for the entropy of $X_w$,
\begin{equation}\label{eq:SFT_one}
    h(X_w)=\ln(\lambda_w),
\end{equation} where $\lambda_w$ is the largest real zero of 
\[
\det(zI-A)c_w(z)+Adj_{i,j}(zI-A).
\]
Here, $I$ is the identity matrix of the respective size, $i=s(w)$ and $j=r(w)$ are the source and range of the walk $w$ in $G$, $Adj_{i,j}(M)$ is the $i,j$-th entry of the adjoint of the matrix $M$, and $c_w(z)$ is the correlation polynomial of $w$ as defined below. 
\begin{definition}\label{def:corr}
	Let $u$ and $w$ be two words with symbols from $\CA,$ and $n=|u|$. The \textit{correlation polynomial} of $u$ and $w$ is defined as
	\[
	(u,w)_z = \sum_{\ell=0}^{n-1} b_\ell z^{\ell}, 
	\]
	where $b_{\ell}=1$, if and only if the overlapping parts of $u$ and $w$ are identical when the left-most symbol of $w$ is placed under the $(n-\ell)^{th}$ symbol of $u$ (from the left), and $b_{\ell}=0$ otherwise. When $u=w$, we denote $(w,w)_z=c_w(z)$. We also denote 
    \[
    (u,w)=\{\ell\mid 0\le\ell\le n-1 \text{ and } b_{\ell}=1\}.
    \]
    For a word $w$, if $(w,w)=\{|w|-1\}$, then we call $w$ a \emph{prime word}. For two distinct words $u,w$, when $(u,w)=(w,u)=\emptyset$, we say that they have \emph{empty cross-correlations}.
\end{definition}


Due to the expression of topological entropy in terms of the correlation polynomial, it is also not difficult to see that the entropy $h(X_w)=O(\lambda^{-n})$ for $n$ large enough, where $n=|w|$ and $h(X)=\ln(\lambda)$~\cite[Theorem 3]{Lind}.  

\section{Perturbations of coded systems}\label{sec:coded}
\begin{definition}
   For a given finite alphabet $\CA$ and a countable irreducible directed graph $G=(V,E)$, we define the edge label as a map $\ell:E\to\CA$. The tuple $\CG=(G,\ell)$ is a labeled directed graph. We define a subshift $X_{\CG}$ to be the closure of
\[
\{x\in\CA^{\Z}\mid \exists e\in E^{\Z} \text{ such that } r(e_{i})=s(e_{i+1}) \text{ and } \ell(e_i)=x_i,\ \forall i\in\Z\}.
\] 
A subshift $X$ is said to be \emph{coded} if there exists a labeled countable irreducible directed graph $\CG$ such that $X=X_{\CG}$. In this case, we say that $\CG$ is a \emph{presentation} of $X_{\CG}$. 
\end{definition}
 Clearly, coded systems are irreducible. When $G$ is a finite directed graph (not necessarily irreducible), we call the subshift a \emph{sofic shift}. For a finite directed graph where the labeling is identity, $\ell:E\to E$, the resulting sofic shift is a subshift of finite type. 
An irreducible sofic shift has an irreducible labeled graph representation and hence is a coded system. Hence, all the irreducible SFTs are also coded systems. 

\begin{example}\label{ex:s-gap}
    Another notable example of coded systems is \emph{$\CS$-gap shifts}. Let $\CS=\{s_i\in\N_0\mid s_i<s_{i+1},i\ge 0\}$ be a nonempty subset of non-negative integers. If $\CS$ is finite, define the $\CS$-gap shift, denoted as $X_{\CS}$, to be the set of all
binary sequences for which 1's occur infinitely often in both directions and the number of 0's between successive occurrences of a 1 is an integer in $\CS$. When $\CS$ is infinite, together with the sequences defined above, we also allow sequences to begin or end with an infinite string of zeros. For example, when $\CS=\N_0$, $X_{\CS}$ is the full shift on 2 symbols, when $\CS=\N_0\setminus\{0\}$, $X_{\CS}$ is the golden mean shift and when $\CS=\{2k\mid k\in\N_0\},$ $X_{\CS}$ is the even shift. In general, it is easy to see that $X_\CS$ is a subshift. In fact, when $\CS$ is finite, then $X_\CS$ is an SFT and when $\Delta_S$ is eventually periodic, then $X_\CS$ is sofic where $\Delta_\CS=\{s_{i+1}-s_{i}\}_{i\ge 0}$~\cite{DJ12}. It is also a coded system and has a presentation given as $G=(V,E)$ where $V=\{v_0,v_1,\dots\}$ and there is an edge from $v_i$ to $v_{i+1}$ labeled 0 for all $i\ge 0$ and there is an edge from $v_i$ to $v_0$ labeled 1 whenever $i\in\CS$. We refer to~\cref{fig:2} for an example of graph representation for the even shift. 
\begin{figure}[h]
		\centering
		$\displaystyle
            \begin {tikzpicture}[-latex ,auto ,node distance =2cm and 3cm ,semithick , state/.style ={draw, circle}] 
		\node[state,scale=0.9] (A) {$v_0$};
		\node[state,scale=0.9] (B) [right of= A] {$v_1$};
        \node[state,scale=0.9] (C) [right of= B] {$v_2$};
        \node[state,scale=0.9] (D) [right of= C] {$v_3$};
        \node[state,scale=0.9] (E) [right of= D] {$v_4$};
        \node[state,scale=0.9] (F) [right of= E] {$v_5$};
		\path (A) edge [ left =25] node[above] {$0$} (B);
        \path (B) edge [ left =25] node[above] {$0$} (C);
        \path (C) edge [ left =25] node[above] {$0$} (D);
        \path (D) edge [ left =25] node[above] {$0$} (E);
        \path (E) edge [ left =25] node[above] {$0$} (F);
	\path (C) edge [bend right = -30] node[below] {$1$}(A);
    \path (E) edge [bend right = -30] node[below] {$1$}(A);
		\path (A) edge[in=145,out=-145,loop,distance=2cm]node[left]{$1$} (A);
        \node[right=.1cm of F] {$\dots$};
	\end{tikzpicture} 
	$
	\caption{Labeled graph representation for even shift}
	\label{fig:2}
\end{figure}
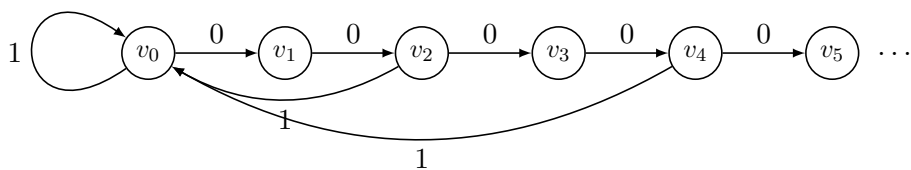
\end{example}

In general, for a coded system $X$ and an allowed word $w \in \mathcal{L}(X)$, consider the perturbed system $X_w$. In the simplest case, where $X$ is the full shift on $\CA = \{0,1\}$ and $w = 10$, we have $0, 1 \in \CL(X_w)$ since $\overline{0}, \overline{1} \in X_w$, where $\overline{a} = \dots aa.aa \dots$ denotes the bi-infinite repetition of the symbol $a$. However, there is no word $u \in \CL(X_w)$ such that $1u0 \in \CL(X_w)$. Hence, $X_w$ is reducible and therefore not coded.
Thus, $X_w$ may not always be a coded system. However, one may ask whether it remains coded under the additional assumption that $X_w$ is irreducible. We provide the following counterexample.

 Let $Y$ be an infinite minimal subshift on $\CA=\{0,1\}.$ For instance, we can take $Y$ to be the Fibonacci shift. Fix $x=(x_i)_{i\in\Z}\in Y$. We construct an irreducible labeled and countable graph $\CG$ as follows. Let $V=\Z$ and there is an edge from $i$ to $i+1$ labeled $x_i$ and there is an edge from $i+1$ to $i$ labeled 2 (\cref{fig:3}). Here, $X=X_{\CG}$ is a coded system. Now take $w=2$. Note that $X_w=Y,$ which is irreducible and minimal. This is not a coded system as $Y$ has no periodic points, whereas coded systems have a dense set of periodic points~\cite{BH86}. Hence, the irreducibility of $X_w$ is not a good enough assumption.  
 \begin{figure}[h]
		\centering
		$\displaystyle
            \begin {tikzpicture}[-latex ,auto ,node distance =2cm and 3cm ,semithick , state/.style ={draw, circle}] 
		\node[state,scale=0.9] (A) {$-2$};
		\node[state,scale=0.9] (B) [right of= A] {$-1$};
        \node[state,scale=0.9] (C) [right of= B] {$0$};
        \node[state,scale=0.9] (D) [right of= C] {$1$};
        \node[state,scale=0.9] (E) [right of= D] {$2$};
        \node[state,scale=0.9] (F) [right of= E] {$3$};
		\path (A) edge [ left =25] node[above] {$x_{-2}$} (B);
        \path (B) edge [ left =25] node[above] {$x_{-1}$} (C);
        \path (C) edge [ left =25] node[above] {$x_0$} (D);
        \path (D) edge [ left =25] node[above] {$x_1$} (E);
        \path (E) edge [ left =25] node[above] {$x_2$} (F);
	\path (B) edge [bend right = -30] node[below] {$2$}(A);
    \path (C) edge [bend right = -30] node[below] {$2$}(B);
    \path (D) edge [bend right = -30] node[below] {$2$}(C);
    \path (E) edge [bend right = -30] node[below] {$2$}(D);
    \path (F) edge [bend right = -30] node[below] {$2$}(E);
        \node[left=.1cm of A] {$\dots$};
        \node[right=.1cm of F] {$\dots$};
	\end{tikzpicture} 
	$
	\caption{Labeled graph representation for a coded system with a non-coded perturbation}
	\label{fig:3}
\end{figure}
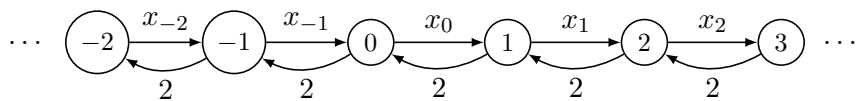

For an irreducible subshift $X$, an allowed word $m$ is said to be \emph{synchronizing} if whenever $um,mv\in\CL(X)$, we have $umv\in\CL(X)$. We call $X$ a \emph{synchronized system} if it has a synchronizing word in its language. All synchronized systems are coded~\cite{BH86}.  Now we prove~\cref{thm:synchro}.

\begin{proof}[Proof of~\cref{thm:synchro}] 
We prove that $m$ is a synchronizing word for $X_w$. Let $u,v\in\CL(X_w)$ be such that $um,mv\in\CL(X_w)$. Choose $u_n,v_n$ to be such that $u_num,mvv_n\in\CL(X_w)$ where $|u_n|=|v_n|=n$. In particular, we have $u_num,mvv_n\in\CL(X)$. Since $m$ is a synchronizing word for $X$, we have $u_numvv_n\in\CL(X)$. 
That is, there exists $x^n\in X$ such that $x^n_{[-n-|u|,n+|m|+|v|-1]}=u_numvv_n$. Here $w\not\prec u_numvv_n$ since $m\not\prec w$. Taking a converging subsequence of $(x^n)_{n\ge 1}$ and its limit $x,$ we find that $x_{[-|u|,|m|+|v|]}=umv$ and that $x$ does not contain $w$ as a subword. Hence, $x\in X_w$ and $umv\in\CL(X_w)$.
\end{proof}
\begin{example}\label{ex:synch}
Let $X_{\CS}$ be an $\CS$-gap shift. Here, $1$ is a synchronizing word. The only choices for $w\in\CL(X_{\CS})$ that do not contain 1 is when $w=\overline{0}^k$ (here $\overline{0}^k$ denotes 0 repeated $k$ times) for some $k\ge 1$. In this case, it is easy to see that $X_w=X_{\CS_0}$ where $S_0=\{s\in\CS\mid s< n\}.$ Thus, $X_w$ is an irreducible SFT and is therefore synchronized. This is a very special case. If $k\in\CS$, then $m=1\overline{0}^k1\in\CL(X_{\CS})$ is also a synchronizing word. For any word $w\in\CL(X_{\CS})$ that does not contain $m$ as a subword, we see that $X_w$ is a synchronizing system provided it is irreducible. 
For instance, we can choose $k=min_{i}\{i\in\CS\}$. 
This gives us a lot more non-trivial examples.

The following example shows that $X_w$ being irreducible is a necessary condition in~\cref{thm:synchro}. That is, choosing $w\in\CL(X)$ so that $m\not\prec w$ and $m\in\CL(X_w)$ do not guarantee that $X_w$ is irreducible. Let $X$ be the full shift on two symbols. Then we can choose $m=11$. Let $w=10$. Here, $m\not\prec w$ and $m\in \CL(X_w)$ since $\overline{1}\in X_w$. But $X_w$ is not irreducible. 
\end{example}

Let $X$ be a coded system and let $\CG=(V,E,\ell)$ be a presentation of $X$. We define $S_{\CG}(w)=\{v\in V\mid s(w)=v\}$ and $R_{\CG}(w)=\{v\in V\mid r(w)=v\}$ where $s(w),r(w)$ are the source and range of all possible walks labeled $w$ in $\CG$, respectively. We will now prove~\cref{thm:conjugacy}.

\begin{proof}[Proof of~\cref{thm:conjugacy}]
Let $\CA$ be the alphabet and $|u|=|w|=n$.
    The proof is similar to that of~\cite[Proposition 4.1]{CMRW24}. We define a map $\phi:X_u\to X_w$ that replaces all the occurrences of $w$ in $X_u$ by $u$. 
    The map $\phi$ is a sliding block code induced by $\Phi:\CA^{2n-1}\to\CA$ defined as,
    \[
    \Phi(x_{[-n+1,n-1]})=\begin{cases}
        u_{i}\ \text{ if } x_{[1-i,n-i]}=w \text{ for some } 1\le i\le n\\
 w_{i}\ \text{ if }  x_{[1-i,n-i]}=u \text{ for some } 1\le i\le n\\
        x_0\ \text{ otherwise}.
    \end{cases}
    \] One can easily verify that $\Phi$ is well-defined because the conditions on the correlations of $u,w$. Consider the sliding block code, $\phi':\CA^{\Z}\to\CA^{\Z}$. 
    For any sequence $x\in\CA^{\Z}$, the map $\phi'$ replaces the appearances of $u$ in $x$ by $w$ and $w$ in $x$ by $u$. Clearly, $\phi'$  is an involution. 

    It is enough to show that the restriction map $\phi:X_u\to\CA^{\Z}$ is mapped onto $X_w$. Let $\CG$ be the representation of $X$ such that $S_{\CG}(u)=S_{\CG}(w)$ and $R_{\CG}(u)=R_{\CG}(w)$. For $x\in X_u$, $x$ does not contain $u$ as a subword, but it may contain $w$ as its subword. Moreover,
    either $x$ is represented by a bi-infinite walk in $\CG$ or it is a limit point of the set of all bi-infinite walks in $\CG$. If $x$ is represented by a walk in $\CG$, since $S_{\CG}(u)=S_{\CG}(w)$ and $R_{\CG}(u)=R_{\CG}(w)$, replacing $w$ by $u$ is still an admissible walk in $\CG$. Hence $\phi(x)\in X_w$. Let $(x_i)_{i\ge 0}$ be a sequence of bi-infinite walks in $\CG$, each of them does not contain $u$ as a subword, such that $\lim_{i\to\infty}x_i=x$. For any given $N$, there exists $k$ such that for all $i>k$, $x_{[-N,N]}=x_{i_{[-N,N]}}$. Let $y_i=\phi(x_i)\in X_w$. Clearly, $\phi(x)\in X_w$ as it is a limit point of $(y_i)_{i\ge 0}$. Using a similar argument, we can show that $\phi(X_w)\subseteq X_u$ and the result follows as $\phi$ is an involution.
\end{proof}
\begin{remark}
    For a subshift $X$ and two words $u,w\in\CL(X),$ the concept of replacing $u$ by $w$ in a sequence is related to their corresponding extender sets~\cite{GP19}. In order to ensure that the sequence that we obtain by replacing $u$ by $w$ is still admissible, it is enough to assume that they have the same extender set. 
\end{remark}
\begin{example}\label{ex:conjugacy}
\begin{enumerate}
    \item When $X$ is the full shift over the alphabet $\CA = \{1, \dots, N\} $, it can be represented by a labeled graph $\CG = (V, E, \ell)$, where $V = \{v_0\}$ consists of a single vertex, and there are $N$ edges from $v_0$ to itself, each labeled with a distinct symbol from $\CA$ (When $N=2,$ we refer to the left most graph of~\cref{fig:1}). Hence, any two words $u,w\in\CA^n$ satisfying the required conditions on their correlations as in~\cref{thm:conjugacy} give two perturbed systems $X_u$ and $X_w$ that are conjugate.

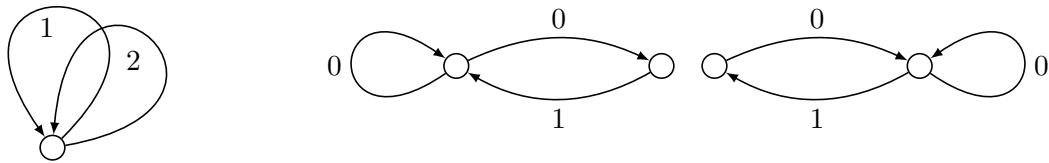
\begin{figure}[h]
		\centering
		$\displaystyle
        \begin{matrix}
            \begin {tikzpicture}[-latex ,auto ,node distance =3cm and 4cm ,semithick , state/.style ={draw, circle}] 
		\node[state,scale=0.9] (A) {};
		\path (A) edge [in=125,out=45,loop,distance=3cm] node[auto]{$1$} (A); 
        	\path (A) edge [in=85,out=10,loop,distance=3cm] node[auto]{$2$} (A); 
	\end{tikzpicture}
            &
            \begin {tikzpicture}[-latex ,auto ,node distance =3cm and 4cm ,semithick , state/.style ={draw, circle}] 
		\node[state,scale=0.9] (A) {};
		\node[state,scale=0.9] (B) [right of= A] {};
		\path (A) edge [bend left =25] node[above] {$0$} (B);
		\path (B) edge [bend right = -30] node[below] {$1$}(A);
		\path (A) edge[in=145,out=-145,loop,distance=2cm]node[left]{$0$} (A);
	\end{tikzpicture} 
    &
       \begin {tikzpicture}[-latex ,auto ,node distance =3cm and 4cm ,semithick , state/.style ={draw, circle}] 
		\node[state,scale=0.9] (A) {};
		\node[state,scale=0.9] (B) [right of= A] {};
		\path (A) edge [bend left =25] node[above] {$0$} (B);
		\path (B) edge [bend right = -30] node[below] {$1$}(A);
		\path (B) edge[in=35,out=-35,loop,distance=2cm]node[right]{$0$} (B);
	\end{tikzpicture}
        \end{matrix}
	$
	\caption{Representations for full shift on 2 symbols (left) and golden mean shift (center and right)}
	\label{fig:1}
\end{figure}

   \item When $X$ is the golden mean shift and $u,w\in\CL_n(X)$ both start with 0 and end with 1, one can choose $\CG$ as the middle graph in~\cref{fig:1} and we get that any two such words $u,w$  satisfying the required conditions for their correlations as in~\cref{thm:conjugacy} give two perturbed systems $X_u$ and $X_w$ that are conjugate. Similarly, when $u$ and $w$ both start with 1 and end with 0, we can consider $\CG$ as in the right-most graph in~\cref{fig:1}.   
\end{enumerate}
   Both of the above examples were studied in~\cite{CMRW24}, and they called this conjugacy a \emph{swap conjugacy}. When two prime words have a nonempty cross-correlation, a chain of swap conjugacies between them can be constructed by introducing intermediate prime words with empty cross-correlation. However, extending this approach to a general coded system appears to be quite difficult.
   
\begin{enumerate}
    \item[(3)] We can apply the above result to an $\CS$-gap shift. Let $u$ be a word that begins with $m>0$ zeros and ends with 1. Then for the representation $\CG$ as described in~\cref{ex:s-gap}, $S_{\CG}(u)=\{v_{t-m}\mid t\in\CS, \ t\ge m\}$ and $R_{\CG}(u)=\{v_0\}.$ Similarly, if $u'$ is a word that begins with 1 and ends with $m'>0$ zeros, then for the representation $\CG$ as described in~\cref{ex:s-gap}, we have $S_{\CG}(u)=\{v_{t}\mid t\in\CS\}$ and $R_{\CG}(u)=\{v_m\}.$ Hence, by~\cref{thm:conjugacy}, for two words $u,w\in\CL_n(X_\CS)$ with $0\notin (u,u)=(w,w)$ and $(u,w)=(w,u)=(u,u)\setminus\{n-1\}$, we have a conjugacy between $X_u$ and $X_w$ if they start with the same number of zeros and end with the same number of zeros. We will also see later that these perturbations have the same entropy.   
\end{enumerate}
\end{example}

\subsection{Perturbations of sofic shifts}\label{sec:sofic}
In this section, we restrict ourselves to the case where $X$ is a sofic shift. Let $\CG=(G,\ell)$ be a finite labeled directed graph that represents $X$. 

Let $\ell_\infty:X_{G}\to X_{\CG}$ be the sliding block code defined as $\ell_\infty(y)_i=\ell(y_i)$, where the block code is $\ell:\CL_1(X_G)\to\CL_1(X_\CG)$. This is clearly surjective and hence a factor map. In fact, a subshift is sofic if and only if it is a factor of a subshift of finite type~\cite{LM_book}. 

A labeled directed graph is called \emph{right-resolving} if distinct edges in $G$ starting from the same vertex are labeled distinctly. 
A sofic shift $X$ is said to have a right-resolving presentation if there exists a right-resolving labeled graph $\CG=(G,\ell)$ such that $X=X_{\CG}$. Any sofic shift $X$ has a right-resolving presentation $\CG=(G,\ell)$ where $h(X)=h(X_G)$. One can choose $\CG$ to be an irreducible right-resolving graph when $X$ is also irreducible.
We can naturally extend the map $\ell:\CL(X_G)\to\CL(X_\CG)$.
If we choose $\CG$ to be right-resolving, for a word $w\in\CL_n(X_{\CG}),$ there are at most $r$ pre-images in $\CL_n(X_G)$ by $\ell$ where $r$ is the number of vertices in $G$. 

\begin{theorem}\label{thm:sofic}
    Let $X$ be a sofic shift and $w\in\CL(X)$. Then $X_w$ is a sofic shift. 
\end{theorem}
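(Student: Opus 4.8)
\section*{Proof proposal}

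The plan is to exhibit $X_w$ as a factor of a subshift of finite type; by the characterization recalled above (a subshift is sofic if and only if it is a factor of an SFT), this is precisely what must be shown. First I would fix any finite labeled presentation $\CG=(G,\ell)$ of $X$ and work with the factor map $\ell_\infty:X_G\to X_\CG=X$. Since $G$ is finite, $X_G\subseteq E^{\Z}$ is compact and $\ell_\infty$ is continuous, so $\ell_\infty(X_G)$ is already closed; hence the closure in the definition of $X_\CG$ is redundant and the equality $X=\ell_\infty(X_G)$ holds on the nose. This exactness (as opposed to mere density) is the point that makes the image computation below precise.

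Next, set $n=|w|$ and let $\CF_w\subseteq\CL_n(X_G)$ be the collection of all walks $e_1\cdots e_n$ in $G$ with $\ell(e_1)\cdots\ell(e_n)=w$. Because $G$ is finite, there are only finitely many walks of length $n$, so $\CF_w$ is finite. I would then define
\[
Y:=\{y\in X_G\mid y_{[i,i+n-1]}\notin\CF_w\text{ for all }i\in\Z\},
\]
that is, the edge shift $X_G$ together with the finitely many additional forbidden blocks in $\CF_w$. Being an edge shift with finitely many further forbidden words, $Y$ is itself an SFT.

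The core step is to verify that $\ell_\infty(Y)=X_w$. For the inclusion $\subseteq$: if $y\in Y$ then $\ell_\infty(y)\in X$, and $\ell_\infty(y)$ cannot contain $w$, since an occurrence of $w$ starting at position $i$ in $\ell_\infty(y)$ would force $y_{[i,i+n-1]}\in\CF_w$, contradicting $y\in Y$; hence $\ell_\infty(y)\in X_w$. For the inclusion $\supseteq$: given $x\in X_w\subseteq X=\ell_\infty(X_G)$, lift it to some $y\in X_G$ with $\ell_\infty(y)=x$; since $x$ contains no occurrence of $w$, no window $y_{[i,i+n-1]}$ can lie in $\CF_w$, so $y\in Y$ and $x\in\ell_\infty(Y)$. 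Therefore $X_w=\ell_\infty(Y)$ is the image of the SFT $Y$ under the restricted sliding block code $\ell_\infty$, i.e.\ a factor of an SFT, and hence sofic.

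I do not anticipate a serious obstacle here: the only point requiring genuine care is the exactness $X=\ell_\infty(X_G)$, which is where finiteness of $G$ and compactness enter, together with the routine observation that forbidding the single label $w$ downstairs pulls back to forbidding the finite set $\CF_w$ of labeled walks upstairs. The argument uses an \emph{arbitrary} presentation, so it needs neither right-resolvingness nor any irreducibility hypothesis; in particular it applies verbatim in the degenerate cases where $X_w=\emptyset$ (trivially sofic) or where $Y$ is reducible.
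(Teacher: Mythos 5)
Your proof is correct and follows essentially the same route as the paper's: pull the single forbidden label $w$ back to the finite set of walks in the edge shift $X_G$ carrying that label, forbid them to obtain an SFT, and check that the labeling map restricts to a surjection onto $X_w$, exhibiting it as a factor of an SFT. The only differences are cosmetic refinements on your part — you note explicitly that an arbitrary (not necessarily right-resolving) presentation suffices and that compactness of $X_G$ makes the closure in the definition of $X_\CG$ redundant, points the paper's proof leaves implicit.
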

\begin{proof}
    We assume $X_w\ne\emptyset$.
    Let $\CG=(G,\ell)$ be a right-resolving presentation of $X$ such that $h(X)=h(X_G)$. There exists a factor map $\ell_\infty:X_G\to X$.  We denote $Y=X_G$. Let $\ell^{-1}(w)=\{u_1,\dots,u_s\}$ and let $Y_{u_1,\dots,u_s}\subseteq Y$ be the subshift that consists of sequences that do not contain $u_1,\dots,u_s$ as subwords. 
    
    We consider the restriction map $\ell_\infty:Y_{u_1,\dots,u_s}\to X$. If $y\in Y$ does not contain $u_1,\dots,u_s$, then $\ell_\infty(y)$ does not contain $w$. Hence, $\ell_\infty(Y_{u_1,\dots,u_s})\subseteq X_w$. Also, for any sequence $x\in X_w$, its pre-image $\ell_\infty^{-1}(x)$ contains sequences that do not have $\ell^{-1}(w)$ as subwords. Hence, $\ell_\infty':Y_{u_1,\dots,u_s}\to X_w$ is a sliding block code that is surjective, and thus a factor map. Clearly, $Y_{u_1,\dots, u_s}$ is an SFT and therefore $X_w$ is sofic. 
\end{proof}

Although the above theorem states that perturbations of irreducible sofic shifts remain sofic, it does not specify the conditions under which they are irreducible. 
Note that one cannot expect to guarantee the irreducibility of the perturbed system simply by considering longer words. For example, in case of full shift $X$ on 2 symbols, for each length $k\ge 2$, there exists a word $w\in\CL_k(X)$ such that $X_w$ is reducible~\cite[Proposition 3.5]{CMRW24}.  

Theorem~\ref{thm:sofic} clearly demands us to look at the perturbation of SFTs by multiple words. For a subshift $X$, let $\CK\subseteq\CL(X)$ be a finite collection of allowed words. We define the multi-word perturbation of $X$ by $\CK$, denoted as $X_{\CK}$, to be the collection of all sequences in $X$ that do not contain words from $\CK$ as subwords. We discuss more about the multi-word perturbations of SFTs in Section~\ref{sec:multi-wordSFT}. However, we state the following result that allows us to study the perturbation of irreducible sofic shifts. 

\begin{theorem}\cite[Theorem 1]{Ramsey24}\label{thm:multi_SFTLinear}
Let $X$ be an irreducible subshift of finite type.
    There exist $C(X),N(X)>0$ such that for a collection of finite allowed words $\CK$ in $X$ with the length of the shortest word, say $n>N(X)$, we have
    \[
    h(X)-h(X_{\CK})\le 
    \frac{C}{n}.
    \]
\end{theorem}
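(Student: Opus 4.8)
The plan is to convert the entropy estimate into a perturbation estimate for the Perron root of a finite non-negative matrix. Because every SFT is conjugate to an edge shift and $h$ is a conjugacy invariant, I would first assume $X=X_A$ is an irreducible edge shift with $h(X)=\ln\lambda$ where $\lambda=\lambda_A$, and fix the positive right and left Perron eigenvectors $v,w$ of $A$ supplied by Perron--Frobenius. Set $L=\max_{u\in\CK}|u|$ and pass to the $L$-block presentation of $X$: this is conjugate to $X$, so its adjacency matrix $B$ again has Perron root $\lambda$, and each forbidden word $u\in\CK$ now corresponds to deleting a prescribed set of transitions (those $L$-blocks that contain $u$) from the block graph. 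Hence $X_\CK$ is realized as a sub-edge-shift with non-negative adjacency matrix $B'=B-E$, where $E\ge 0$ records the deleted transitions, and $h(X_\CK)=\ln\lambda'$ with $\lambda'=\lambda_{B'}$. The theorem then becomes the inequality $\ln\lambda-\ln\lambda'\le C(X)/n$.

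The second step is to bound the drop $\lambda-\lambda'$. Writing $V,W$ for the positive right and left Perron eigenvectors of $B$, the Collatz--Wielandt formula applied to the test vector $V$ gives the unconditional lower bound $\lambda'\ge\lambda-\max_i (EV)_i/V_i$, while the sharp rate comes from the averaged first-order bound $\lambda-\lambda'\le (1+o(1))\,W^\top E V/W^\top V$, valid while the total deleted weight is small. The arithmetic heart of the matter is that the transitions deleted on account of a single word $u\in\CK$ carry cumulative Perron weight $W^\top E_u V\asymp \lambda^{-|u|}\,W^\top V$, since $|u|\ge n$ and the entries of $V$ (equivalently of $W$) have uniformly bounded ratios; thus each word contributes an amount $\asymp\lambda^{-|u|}\le\lambda^{-n}$ to the normalized drop. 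Summing over $\CK$ bounds $\lambda-\lambda'$ by a constant times the cumulative Parry weight $\sum_{u\in\CK}\lambda^{-|u|}$ of the forbidden set, and dividing by $\lambda$ transfers this to $\ln\lambda-\ln\lambda'$. An equivalent route, which is the one indicated in~\cite{Ramsey24}, studies the complexity function $m\mapsto\#\CL_m(X_\CK)$ directly through its renewal/generating-function identity (a multi-word analogue of~\cref{eq:SFT_one}), whose smallest singularity is exactly $\lambda'$; the two approaches produce the same quantity.

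The main obstacle is uniformity, and it is exactly here that the threshold $n>N(X)$ and the linear rate $1/n$ are forced. A single-scale union bound on $\#\CL_m(X)$ is useless for entropy, because the number of positions at which a forbidden word can sit grows with $m$; encoding the avoidance constraint in the matrix $B'$ is what removes this $m$-dependence, at the price of controlling a perturbation of the leading eigenvalue. Two quantitative inputs must then be supplied uniformly in the block length $L$: first, a bound $\max_a V_a/\min_a V_a\le\kappa(X)$ on the ratio of Perron eigenvector entries of the $L$-block matrix that is \emph{independent of $L$}, which reduces via the recoding to bounded ratios of the entries of $v$ and $w$ and hence holds for any irreducible $A$, and which is what makes each deleted word cost only $O(\lambda^{-n})$; and second, control of the cumulative weight $\sum_{u\in\CK}\lambda^{-|u|}$ uniformly, which is immediate when $\#\CK$ is controlled in terms of $X$ (as in the application to sofic shifts behind~\cref{thm:sofic_entropy}, where $\#\CK\le r$ and the drop is in fact exponentially small) and is the crux of the general estimate. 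Controlling this cumulative weight — rather than the effect of a single word, where the first-order perturbation of~\cite{Lind} already suffices — is the step I expect to be hardest.
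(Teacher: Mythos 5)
You should first be aware that the paper does not prove this statement: it is imported verbatim from \cite{Ramsey24} and used as a black box (in the proof of \cref{thm:sofic_entropy}), and the source's argument is a direct combinatorial analysis of the complexity function $m\mapsto\#\CL_m(X_{\CK})$, not a spectral one. So your route is necessarily different from the source's, and the question is whether it is complete. It is not, and the gap sits exactly at the step you call the ``averaged first-order bound'' $\lambda-\lambda'\le(1+o(1))\,W^\top EV/W^\top V$. This is the first-order eigenvalue perturbation formula evaluated at $t=1$ for a perturbation $E$ that is not small in any operator norm: as soon as an $L$-block contains a forbidden word, the entire corresponding row of $B$ is removed, so $\|E\|$ is comparable to $\|B\|$ and the standard second-order error is not dominated by the first-order term. (Relatedly, the Collatz--Wielandt bound with test vector $V$ is vacuous here: for any deleted vertex $i$ one has $(EV)_i/V_i=\lambda$, so it only yields $\lambda'\ge 0$.) No justification for the $(1+o(1))$ is offered, and this is precisely the point this paper identifies as open: \cref{rem:SFT_Exp} sketches the same eigenvalue computation via the determinantal equation of \cref{thm:SFT_Multi} and explicitly declines to complete it, and the introduction states that an exponential bound for multi-word perturbations ``is not yet obtained''. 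A complete version of your two paragraphs would settle that open point, so the missing step is not cosmetic. (For a single word this step is carried out in \cite{Lind}, but by analyzing the explicit characteristic polynomial of \cref{eq:SFT_one}, not by abstract perturbation theory.)

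Even granting that step, the argument does not reach the statement as written. It would produce $h(X)-h(X_{\CK})=O\bigl(\sum_{u\in\CK}\lambda^{-|u|}\bigr)$, which is meaningful only when this cumulative weight is small --- you flag this as ``the crux'' and leave it open --- and the claimed rate $C/n$ is never actually derived; the two bounds are incomparable once $\#\CK$ is allowed to grow. Indeed some control on $\#\CK$ is indispensable: for the full $2$-shift and $\CK=\CL_n(X)\setminus\{0\cdots0\}$ the perturbed system is a single fixed point and the entropy drop is $\ln 2$, so the theorem as transcribed cannot hold for arbitrary finite collections and the cited result must carry a restriction (or a $\#\CK$-dependence in $C$) suppressed here; in the paper's only application one has $\#\CK\le r$. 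Two smaller corrections: deleting ``$L$-blocks that contain $u$'' costs an extra factor $L-|u|+1$ in the Parry weight of the deleted set (the number of positions at which $u$ can sit inside an $L$-block), so to get a clean $\lambda^{-|u|}$ per word you should instead delete the transitions whose $(L+1)$-block ends with $u$; and the eigenvector-ratio uniformity you invoke is correct ($V_b=v_{r(b)}$, $W_b=w_{s(b)}$ for the $L$-block matrix) but is not where the difficulty lies.
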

Using the approximation of sofic shifts by SFTs,~\cref{thm:sofic_entropy} is immediate from~\cref{thm:multi_SFTLinear}. 

\begin{proof}[Proof of~\cref{thm:sofic_entropy}]
    Let $\CG=(G,\ell)$ be an irreducible, right-resolving presentation of $X$ such that $h(X)=h(Y)$ where $Y=X_G$.
    Let $\ell^{-1}(w)=\{u_{1},\dots,u_{s}\}$. If $r$ is the number of vertices of $G$, then $s\le r$. Also, $r\left(\#\CL_n(X_w)\right)\ge \#\CL_n(Y_{u_1,\dots,u_s})$ for all $n\ge 1$ implies that $h(Y_{u_{1},\dots,u_{s}})=h(X_{w}).$ Hence, the result follows from Theorem~\ref{thm:multi_SFTLinear}. 
\end{proof}

Again, to get an exponential decay, we need to obtain an exponential decay for the multi-word perturbation of SFT.

\section{Perturbations of $\CS$-gap shifts}\label{sec:S-gap}
Let $\CS=\{s_i\in\N_0\mid s_i<s_{i+1},i\ge 0\}$ be a nonempty subset of non-negative integers. Recall the definition of the $\CS$-gap shift $X_{\CS}$ from Example~\ref{ex:s-gap}. 
A sequence $x\in\{0,1\}^{\Z}$ belongs to $X_{\CS}$ if each subword of the form $1\overline{0}^n1$ that appears in $x$ satisfies $n\in\CS$.
We have $h(X_\CS)=\ln(\lambda_\CS)$~\cite{Spandl07} where $\lambda_\CS$ is the unique real root of
\[
\sum_{s\in\CS}\frac{1}{z^{s+1}}=1.
\]


For a given $\CS\subseteq\N_0$, let $X:=X_{\CS}$ and $w\in\CL(X)$. We define $X_w$ as the collection of all sequences that do not contain $w$ as a subword. Any $\CS$-gap shift is entropy-minimizing and hence $h(X_w)<h(X)$~\cite{GP19}.
As discussed in~\cref{ex:synch}, when $w=\overline{0}^n$ for $n\ge 1$, the perturbed system $X_w=X_{\CS_0}$ is an SFT where $S_0=\{s\in\CS\mid s< n\}.$ Hence, $h(X_w)=\ln(\lambda_w)$ where $\lambda_w$ is the unique real zero of
\[
\sum_{s\in\CS,\ s
< n}\frac{1}{z^{s+1}}=1.
\] Clearly, as $n\to\infty,$ the entropy $h(X_w)$ converges to $h(X)$.
We are interested in calculating $h(X_w)$ for general $w$. Before that, we present some notations. We define the characteristic function $\chi_{\CS}:\N_0\to\{0,1\}$ as $\chi_{\CS}(n)=1$ if and only if $n\in\CS$. 

For $w\in\CL(X)$ which is not of the type $\overline{0}^n$, let $\tilde{w}$ be the smallest word such that $w\prec \tilde{w}$ and $\tilde{w}=\overline{0}^ku\overline{0}^{k'}$ where $k,k'\in\CS\cup\{0\}$ and $u$ is a word that starts and ends with 1. For example, let $\CS=\{0,3,6,\dots\}$ and $w=00100010000$. Here, $\tilde{w}=00010001000000.$ Observe that, $X_w=X_{\tilde{w}}$.

In this section, we assume that $w\in\CL(X)$ does not start and end with 0 simultaneously (hence, the words of the type $\overline{0}^n$ are already ruled out). 
Let $\tilde{w}$ be as before. 
Assume $\tilde{w}$ begins with $m$ number of zeros or ends with $m$ number of zeros for $m\in\CS$ (note that $\tilde{w}$ may begin with 0, end with 0, or do neither; however, it cannot do both). When $w$ begins and ends with 1, we take $m=0$. We denote,
\begin{eqnarray*}
    T_{\CS}(z)&=&\sum_{n=0}^{\infty}\frac{\chi_{\CS}(n)}{z^{n+1}}\\
    \CS_m&=&\{n-m\mid n\in\CS, n\ge m\}.
\end{eqnarray*}

\begin{theorem}\label{thm:S_gap} 
Let $w\in\CL(X)$ be of the form $w=u\overline{0}^m$ or $w=\overline{0}^mu$ for some $u$ that starts and ends with 1 and $m\ge 0$. Then the generating function $F(z)=\sum_{n=0}^\infty f(n)z^{-n}$ where $f(n)=\#\CL_n(X_w)$ is given by,
    \[
    F(z)=\frac{z}{z-1}\frac{c_{\tilde{w}}(z)(T_{\CS^c}(z)+z)-T_{\CS^c_m}(z)}{(1-T_{\CS}(z))c_{\tilde{w}}(z)+T_{\CS_m}(z)},
    \] where $\CS^c=\N_0\setminus\CS$ and $\CS_m^c=\N_0\setminus\CS_m$.
\end{theorem}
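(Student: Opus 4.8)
The plan is to adapt the Guibas--Odlyzko correlation-polynomial method to the renewal (block) structure of the $\CS$-gap shift, treating the generating-function identities as the coded-system substitute for the finite transfer matrix that produces~\eqref{eq:SFT_one} in the SFT case. First I would record the reduction $X_w=X_{\tilde w}$: if $w=u\overline{0}^m$, then an occurrence of $w$ in a sequence of $X_{\CS}$ forces the last $1$ of $u$ to be followed by a gap of length $\ge m$, and since every realizable gap lies in $\CS$, avoiding $w$ is equivalent to avoiding $\tilde w=u\overline{0}^{k'}$ with $k'=\min\{s\in\CS:s\ge m\}$; the case $w=\overline{0}^mu$ is symmetric. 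This lets me assume throughout that $w=\tilde w=\overline{0}^ku\overline{0}^{k'}$ has the clean boundary form with a complete terminal (or initial) zero-block of length $m\in\CS\cup\{0\}$.

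Next I would set up the block generating functions, working in the variable $z^{-1}$. The complete block $1\overline{0}^s$ with $s\in\CS$ has length $s+1$ and generating function $T_{\CS}(z)$, so a concatenation of complete blocks contributes $1/(1-T_{\CS}(z))$, whose pole at $z=\lambda_{\CS}$ recovers $h(X_{\CS})=\ln(\lambda_{\CS})$. The two boundary contributions—a leading partial gap and a trailing partial gap—are handled via the identities $\tfrac{1}{z-1}=T_{\CS}(z)+T_{\CS^c}(z)=\sum_{n\ge0}z^{-(n+1)}$ and $\tfrac{z}{z-1}=\sum_{n\ge0}z^{-n}$, the latter being the generating function for an unconstrained run of zeros that accounts for the free left boundary. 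As a warm-up I would assemble $F_{\CS}(z)=\sum_n\#\CL_n(X_{\CS})\,z^{-n}$ from these pieces, to fix the exact bookkeeping of which partial blocks are admissible.

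The core is a two-equation renewal system. Let $F(z)$ be the target generating function and let $H(z)$ count words in $X_{\CS}$ that avoid $\tilde w$ except for a single occurrence placed as a suffix (a terminal cluster). The first, \emph{extension}, equation relates $F$ and $H$ by growing a $\tilde w$-avoiding word within $X_{\CS}$: extending either stays admissible and $\tilde w$-avoiding or produces a first terminal occurrence, and carrying out this step in the block structure introduces the factor $\tfrac{z}{z-1}$ together with the terms $T_{\CS}$ and $T_{\CS^c}+z$. The second, \emph{correlation}, equation relates $H$ to $F$ through the self-overlaps of $\tilde w$, encoded by $c_{\tilde w}(z)$; crucially, the trailing partial block $1\overline{0}^m$ of $\tilde w$ can be completed to a full gap of length $s\ge m$, with continuation $s-m\in\CS_m$, which is precisely why the shifted sums $T_{\CS_m}$ and $T_{\CS^c_m}$ enter. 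Solving the resulting $2\times2$ linear system for $F(z)$ yields
\[
F(z)=\frac{z}{z-1}\,\frac{c_{\tilde w}(z)\bigl(T_{\CS^c}(z)+z\bigr)-T_{\CS^c_m}(z)}{\bigl(1-T_{\CS}(z)\bigr)c_{\tilde w}(z)+T_{\CS_m}(z)},
\]
where the vanishing of the denominator perturbs the root away from $\lambda_{\CS}$.

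I expect the main obstacle to be the correlation equation and its boundary terms. Verifying that overlaps of $\tilde w$ with itself are automatically compatible with the gap constraint—so that they are counted by the unmodified $c_{\tilde w}(z)$—and correctly interfacing this with the partial-gap completion governed by $\CS_m$ (producing $T_{\CS_m}$ in the denominator and $T_{\CS^c_m}$ in the numerator) is the delicate point, as is checking that the two cases $w=u\overline{0}^m$ and $w=\overline{0}^mu$ give the same expression by a left--right symmetry of the block decomposition. I would sanity-check the final formula against the already-understood sub-case $w=\overline{0}^n$ (where it must reduce to the SFT answer for $X_{\CS_0}$) and, after specializing $\CS=\N_0$, against the classical Guibas--Odlyzko generating function for the full shift.
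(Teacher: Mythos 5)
Your proposal follows essentially the same route as the paper: after the reduction to $\tilde{w}$ and the reversal symmetry, the paper sets up exactly the Guibas--Odlyzko-style system you describe --- one count of $\tilde{w}$-avoiding words (split by how they end, which is where the convolutions with $\chi_{\CS}$ and $\chi_{\CS_m}$, i.e.\ $T_{\CS}$ and $T_{\CS_m}$, enter) and one count of words with a single terminal occurrence of $\tilde{w}$, linked by an extension equation and a correlation equation involving $c_{\tilde{w}}(z)$ --- and then solves the resulting linear system. Your plan correctly identifies all the ingredients, including the role of $\CS_m$ in completing the trailing partial gap, so it matches the paper's argument.
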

\begin{proof} 
    Without loss of generality, assume that $w=\tilde{w}$. Let $\overline{w}$ denote the reverse of $w$, that is, $\overline{w}=w_n\dots w_1$ for $w=w_1\dots w_n$. Note that, there is a bijection from $\CL_n(X_w)$ to $\CL_n(X_{\overline{w}})$ by reversing. Hence $h(X_w)=h(X_{\overline{w}})$. It is also each to check that $c_w(z)=c_{\overline{w}}(z)$. This allows us to assume without loss of generality that $w$ starts with $m$ number of zeros for some $m\in\CS\cup \{0\}$ and ends with 1. 
    
    Let $f_\CS(n)$ be the number of words in $\CL_n(X_w)$ that end with $t$ number of zeroes for some $t\in\CS$, $f_{\CS_m}(n)$ be the number of words in $\CL_n(X_w)$ that end with $t$ number of zeroes for some $t\in\CS_m$, $f_1(n)$ be the number of words in $\CL_n(X_w)$ that end with 1, and $g_w(n)$ be the number of words in $\CL_n(X)$ in which the word $w$ occurs exactly once and it is at the end. Let $f_1(0)=1$, $f_\CS(0)=\chi_\CS(0)$, $f_{\CS_m}(0)=\chi_{\CS_m}(0)$ and $g_w(0)=0$. 

    Note that each word in $\CL_n(X_w)$ is either of the form $\overline{0}^{n}$ or $u\overline{0}^{n-k}$ where $u$ is a word in $\CL_k(X_w)$ that ends with 1. On the other hand, since $w$ ends with 1, for any $u\in\CL_k(X_w)$ that ends with 1, the word $u\overline{0}^{n-k}\in\CL_n(X_w)$. Hence,
    \[
    f(n)=\sum_{k=0}^nf_1(k).
    \]
    Also, each word that is counted in $f_\CS(n)$ is either of the form $\overline{0}^{n}$, if $n\in\CS$ or $u\overline{0}^{k}$ where $u$ is a word in $\CL_{n-k}(X_w)$ that ends with 1 and $k\in\CS$. On the other hand, since $w$ ends with 1, for each $k\in\CS$ and $u\in\CL_{n-k}(X_w)$ that ends with 1, the word $u\overline{0}^{k}\in\CL_n(X_w)$. Hence, 
    \[
    f_\CS(n)=\sum_{k=0}^nf_1(n-k)\chi_\CS(k).
    \] Similarly,
    \[
        f_{\CS_m}(n)=\sum_{k=0}^nf_1(n-k)\chi_{\CS_m}(k).
    \]
    Consider a word $u\in \CL_n(X_w)$. Note that $u0$ is a word in $\CL_{n+1}(X_w)$. Also, when $u$ ends with $t$ number of zeros for some $t\in\CS$, then $u1\in\CL_{n+1}(X_w)$ or $u1$ ends with $w$ and it is the only occurrence of $w$ in $u1$. The only word we do not count in $f_{\CS}(n)$ but contribute to a word in $f(n+1)$ or $g_w(n+1)$ is $\overline{0}^n1$ when $n\notin\CS$. Hence,
    \[
   f(n)+f_\CS(n)+ 1-\chi_\CS(n)=f(n+1)+g_w(n+1).
    \]
    In order to get the last recurrence relation, consider a word $u$ counted in $f_{\CS_m}(n)$ and attach $w$ at the end. 
    Firstly, note that this is an allowed word in $X$ since $t+m\in\CS$ for any $t\in\CS_m$. But this need not always be counted in $g_w(n+|w|)$ as other appearances of $w$ may occur in between. Let $uw=v_1\dots v_nv_{n+1}\dots v_{n+|w|}$ and let $k$ be the first occurrence of $w$ in $uw$, that is, $v_{k-|w|+1}\dots v_{k}=w$. Since $u\in\CL_n(X_w)$, $k>n$. Also, $w_{|w|-(k-n)+1}=w_1,\dots,w_{|w|}=w_{k-n}$. This implies that $k-n\in(w,w)$ and that this word is counted in $g_w(k)$. Conversely, for $k-n\in(w,w)$, any word counted in $g_w(k)$ is coming from a concatenation of a word counted in $f_{\CS_m}(n)$ and $w$, except for $\overline{0}^nw$ for $n\notin\CS_m$. Hence,
    \[
     f_{\CS_m}(n)+   1-\chi_{\CS_m}(n)=\sum_{t\in (w,w)}g_w(n+t).
    \]
    We obtain the result by multiplying each of these equations by $z^{-n}$ and summing for all $n\ge 0$ and then solving the linear equations. 
\end{proof}
\begin{corollary}\label{cor:S_gap}
    $h(X_w)=\ln(\lambda_w)$ where $\lambda_w$ is the largest real zero of $f_w(z)=(1-T_{\CS}(z))c_{\tilde{w}}(z)+T_{\CS_m}(z).$
\end{corollary}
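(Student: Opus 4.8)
The plan is to read off the exponential growth rate of $f(n)=\#\CL_n(X_w)$ from the generating function $F(z)=\sum_{n\ge0}f(n)z^{-n}$ computed in \cref{thm:S_gap}, viewed as a power series in the variable $u=1/z$. Since $h(X_w)=\lim_n\frac1n\ln f(n)$ exists, the quantity $\mu:=e^{h(X_w)}=\limsup_n f(n)^{1/n}$ is exactly the reciprocal of the radius of convergence of $\sum_n f(n)u^n$; equivalently, $F(z)$ is analytic on $\{|z|>\mu\}$ and has a singularity on the circle $|z|=\mu$. Thus it suffices to show that the dominant (largest-modulus) singularity of $F$ sits at $z=\lambda_w$, the largest real zero of $f_w$.

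First I would locate the possible singularities of $F$. The series $T_{\CS}(z)=\sum_{n\ge0}\chi_{\CS}(n)z^{-(n+1)}$, and likewise $T_{\CS^c}$, $T_{\CS_m}$ and $T_{\CS_m^c}$, converges absolutely and defines an analytic function on $\{|z|>1\}$, being dominated there by $\sum_n|z|^{-(n+1)}$, while $c_{\tilde w}(z)$ is a polynomial. Hence both the numerator $N(z)=c_{\tilde w}(z)(T_{\CS^c}(z)+z)-T_{\CS_m^c}(z)$ and the denominator $f_w(z)=(1-T_{\CS}(z))c_{\tilde w}(z)+T_{\CS_m}(z)$ are analytic on $\{|z|>1\}$, and $F=\frac{z}{z-1}\cdot N/f_w$ is meromorphic there, with poles contained in $\{z=1\}\cup\{z:f_w(z)=0,\ N(z)\neq0\}$. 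Since entropy is non-negative, $\mu\ge1$, so the dominant singularity lives in $\{|z|\ge1\}$.

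Next I would pin down its location by \emph{Pringsheim's theorem}: as all coefficients $f(n)$ are non-negative, the point $z=\mu$ on the positive real axis is itself a singularity of $F$. In the principal case $\mu>1$ this forces $\mu$ to be a zero of $f_w$, hence a \emph{real} zero, so $\mu\le\lambda_w$. For the reverse inequality I would check that $\lambda_w$ is a genuine pole rather than a removable point, i.e. that $\lambda_w>1$ and $N(\lambda_w)\neq0$; then $\lambda_w$ is a singularity of $F$, and since $F$ is analytic for $|z|>\mu$ this gives $\lambda_w\le\mu$. The two inequalities yield $\mu=\lambda_w$, that is $h(X_w)=\ln\lambda_w$.

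The step I expect to be the main obstacle is exactly this last verification, together with disposing of the spurious factor $\frac{z}{z-1}$: I must rule out a cancellation between $N$ and $f_w$ at $z=\lambda_w$ and confirm that the pole at $z=1$ (and the degenerate possibility $\mu=1$, corresponding to $h(X_w)=0$) never dominates $\lambda_w$. I would handle the $z=1$ issue by passing to the auxiliary generating function $F_1(z)=\sum_n f_1(n)z^{-n}$ of words ending in $1$, which satisfies $F(z)=\frac{z}{z-1}F_1(z)$ because $f(n)=\sum_{k\le n}f_1(k)$; since partial summation does not change the exponential growth rate, $\mu$ is governed by the same denominator $f_w$, and the factor $\frac{z}{z-1}$ influences only the sub-exponential behaviour, precisely in the edge case $\mu=1$. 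Establishing $N(\lambda_w)\neq0$ and $\lambda_w\ge1$ should follow from the combinatorial positivity of the coefficients $f(n)$ and the structure of $X_w$, which guarantees a genuine pole at the radius of convergence.
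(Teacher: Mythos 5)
Your overall frame---that $e^{h(X_w)}$ is the reciprocal radius of convergence of $F$, that Pringsheim places the dominant singularity on the positive real axis, and that the singularities of $F$ in $\{|z|>1\}$ lie among the zeros of $f_w$---is exactly the scaffolding the paper uses when it asserts that $\lambda_w$ is ``the largest real pole of $F(z)$.'' The difficulty is that you defer precisely the two verifications that constitute the paper's proof, and your proposed route for one of them would not work. First, you never establish that $f_w$ actually has a real zero $\lambda_w>1$. The paper proves this by an intermediate value argument: for $z\in(1,\lambda_{\CS})$ one has $T_{\CS_m}(z)\le\frac{1}{z-1}$, hence $(z-1)f_w(z)\le(z-1)(1-T_{\CS}(z))c_{\tilde w}(z)+1$, and the right-hand side tends to $-\infty$ as $z\to1^+$, while $f_w(\lambda_{\CS})=T_{\CS_m}(\lambda_{\CS})>0$; so $f_w$ has a zero in $(1,\lambda_{\CS})$. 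Without this you cannot dispose of the degenerate case $\mu=1$ or show that the zero of $f_w$ dominates the pole of $\frac{z}{z-1}$ at $z=1$.

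Second, and more seriously, the no-cancellation claim $N(\lambda_w)\ne0$ does \emph{not} ``follow from the combinatorial positivity of the coefficients.'' Positivity only guarantees that $z=\mu$ is a singularity of $F$; if the numerator vanished at $\lambda_w$, the dominant singularity would simply sit at some smaller zero of $f_w$ (or at $z=1$), one would have $\mu<\lambda_w$, and the corollary as stated would be false. This must be checked directly, and the paper does so: if some $z_0>1$ were a common zero of $f_w$ and of $c_{\tilde w}(z)(T_{\CS^c}(z)+z)-T_{\CS_m^c}(z)$, then equating the two expressions and using the identity $T_{\CS}(z)+T_{\CS^c}(z)=\frac{1}{z-1}=T_{\CS_m}(z)+T_{\CS_m^c}(z)$ reduces the system to $(z_0^2-2z_0+2)\,c_{\tilde w}(z_0)=1$, which is impossible since $z_0^2-2z_0+2>1$ and $c_{\tilde w}(z_0)\ge z_0^{|\tilde w|-1}>1$. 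Until you supply these two steps, your write-up is a correct plan for the proof rather than a proof.
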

\begin{proof}
    First, we prove that $f_w(z)$ has a root in the interval $(1,\lambda_{\CS})$ where $\lambda_{\CS}$ is the unique real root of $1-T_{\CS}(z).$ For all $z\in(1,\lambda_{\CS}),$ we have $T_{\CS_m}(z)\le\sum_{n=0}^{\infty}\frac{1}{z^{n+1}}=\frac{1}{z-1}.$ Hence $(z-1)f_w(z)\le (z-1)(1-T_{\CS}(z))c_{\tilde{w}}(z)+1.$ Denote this polynomial on the right hand side to be $g_w(z).$ Note that as $z\to 1^+,$ $g_w(z)$ converges to $-\infty.$ Hence, $f_w(z')$ is negative for some $z'\in(1,\lambda_{\CS})$. Also, at $z\ge\lambda_{\CS},$ we have $f_w(\lambda_{\CS})>0$. Hence,  $f_w(z)$ has a root in $(1,\lambda_{\CS}).$ Let $\lambda_w$ denote the largest real root of $f_w(z)$ in this interval. 

    If $z_0>1$ is a root of $f_w(z)$, then we show that $z_0$ is not a root for $c_{\tilde{w}}(z)(T_{\CS^c}(z)+z)-T_{\CS_m^c}(z).$ This will clearly imply that $\lambda_w$ is the largest real pole of $F(z)$ (notation as in~\cref{thm:S_gap}) and hence our result.  

Let us assume, on the contrary, that there exists $z_0>1$ such that
    \[
    (1-T_{\CS}(z_0))c_{\tilde{w}}(z_0)+T_{\CS_m}(z_0)=c_{\tilde{w}}(z_0)(T_{\CS^c}(z_0)+z_0)-T_{\CS^c_m}(z_0).
    \]
    Using the fact that $T_{\CS}(z)+T_{\CS^c}(z)=\frac{1}{z-1}=T_{\CS_m}(z)+T_{\CS_m^c}(z)$, the above equation can be reduced to 
    $(z_0^2-2z_0+2)c_{\tilde{w}}(z_0)=1$. As $z_0>1,$ we have $z_0^2-2z_0+2>1$  which implies that $c_{\tilde{w}}(z_0)<1$ which is not possible.
\end{proof}

\begin{remarks}
For $m\in\CS\cup\{0\}$, let
\[
C_{m,n}=\{w\in\CL_n(X_{\CS})\mid w \text{ starts with } m \text{ number of zeroes and ends with } 1 \}.
\]
We can easily conclude the following.
\begin{enumerate}
    \item For $u,w\in C_{m,n}$, when $c_{u}(z)=c_w(z)$, then $h(X_u)=h(X_w)$. This was also observed in~\cref{ex:conjugacy} (3).
    \item For $u,w\in C_{m,n}$, when $c_{u}(z)<c_w(z)$ for all $z\in(1,\lambda_{\CS})$, then $h(X_u)<h(X_w)$. In particular, the minimum is always achieved by the prime words.
\end{enumerate}
\end{remarks}

\noindent We now prove~\cref{thm:Sgap_exp}.
\begin{proof}[Proof of~\cref{thm:Sgap_exp}]
   We assume that $\CS$ is infinite, as otherwise $X_{\CS}$ is an SFT and it follows from~\cite[Theorem 3]{Lind}. 
   We first assume that $w\ne\overline{0}^k$ for some $k\ge 1$. 
   As before, let $\lambda_w$ be the largest root of $f_w(z)$ in the interval $(1,\lambda_{\CS}),$ where 
   \[
   f_w(z)=(1-T_{\CS}(z))c_{\tilde{w}}(z)+T_{\CS_m}(z).
   \] Cleary, $1-T_{\CS}(z)<0$ for all $z\in (1,\lambda_{\CS}).$
    For each $n$, let $h_n(z)=(1-T_{\CS}(z))z^{n-1}+\frac{1}{z-1}.$ Then $f_w(z)\le h_n(z)$ for all $z\in(1,\lambda_{\CS}).$ Let $z_n$ be the largest root of $h_n(z)$ in the interval $(1,\lambda_{\CS}).$ Note that such a $z_n$ exists and $z_n\le \lambda_w$. Also, since $h_n(z)\le h_1(z)$, we have $z_n\in[z_1,\lambda_{\CS}).$
    It is enough to show that $\lambda_{\CS}-z_n=O(\lambda_{\CS}^{-n})$ for $n$ large. 

    Since $\lambda_\CS$ is a simple and unique root of $1-T_\CS(z)$ that is larger than 1, we can write $1-T_\CS(z)=(z-\lambda_\CS)q(z)$ where $q(z)>0$ for all $z\in[z_1,\lambda_{\CS})$. Hence,
    \[
    0\le\lambda_\CS-z_n=\dfrac{1}{(z_n-1)q(z_n)z_n^{n-1}}\le Cz_n^{-n}
    \] where the constant $1/C$ is obtained taking the minimum of the function $(z-1)zq(z)$ on $[z_1,\lambda_\CS)$. 
Using the mean value theorem for $\ln(x)$ in $[z_n,\lambda_{\CS}]$, we have $-\dfrac{\ln(z_n)}{\ln(\lambda_{\CS})}<-1+dz_n^{-n}<-1+dz_1^{-n}$ for some $d>0$. Hence,
\[
0\le\lambda_{\CS}-\lambda_w\le\lambda_{\CS}-z_n\le C \lambda_{\CS}^{-n\frac{\ln(z_n)}{\ln(\lambda_{\CS})}}\le C\lambda_{\CS}^{-n}\lambda_{\CS}^{ndz_1^{-n}}\le C'\lambda_{\CS}^{-n}.
\] where $C'=C\max_{n\ge 1}\{\lambda_{\CS}^{ndz_1^{-n}}\}<\infty.$

When $w=\overline{0}^k$ for some $k\ge 1$, we have $h(X_w)=\ln(\lambda_w)$ where $\lambda_w$ is the real root of 
\[
t_w(z)=1-\sum_{n\in\CS,n<k}\frac{1}{z^{n+1}}=1-T_{\CS}(z)+\sum_{n\in\CS,n\ge k}\frac{1}{z^{n+1}}.
\]
Let $h_k(z)=1-T_{\CS}(z)+\frac{1}{z^k(z-1)}$. Then $t_w(z)\le h_k(z)$. We can get the exponential decay as $k\to\infty$ using similar arguments as before.
\end{proof}

\subsection{Perturbations of $d-$gap shifts}\label{subsec:d-gap}
Let $d\ge 1$. The $d$-gap shift is an $\CS$-gap shift where $\CS=\{kd\mid k\in\N_0\}$. We denote the shift by $X=X_d$. Clearly, $X_d$ is a sofic shift. In fact, it is easy to check that it has a right-resolving presentation $G_A$ where $A$ is a matrix of order $d$ given by,
 \[
 A=\begin{pmatrix}
        1&1&0&\dots&0\\
        0&0&1&\dots&0\\
        \vdots&\vdots&\vdots&\ddots&\vdots\\
        0&0&0&\dots&1\\
        1&0&0&\dots&0       
    \end{pmatrix}.
 \] 
The labeled graph representation of a $4$-gap shift is given in~\cref{fig:4}. 

\begin{figure}[h]
		\centering
		$\displaystyle
            \begin {tikzpicture}[-latex ,auto ,node distance =2cm and 3cm ,semithick , state/.style ={draw, circle}] 
		\node[state,scale=0.9] (A) {$v_0$};
		\node[state,scale=0.9] (B) [right of= A] {$v_1$};
        \node[state,scale=0.9] (C) [right of= B] {$v_2$};
        \node[state,scale=0.9] (D) [right of= C] {$v_3$};
        
		\path (A) edge [ left =25] node[above] {$0$} (B);
        \path (B) edge [ left =25] node[above] {$0$} (C);
        \path (C) edge [ left =25] node[above] {$0$} (D);
	\path (D) edge [bend right = -30] node[below] {$0$}(A);
		\path (A) edge[in=145,out=-145,loop,distance=2cm]node[left]{$1$} (A);
       
	\end{tikzpicture} 
	$
	\caption{Labeled graph representation for 4-gap shift}
	\label{fig:4}
\end{figure}
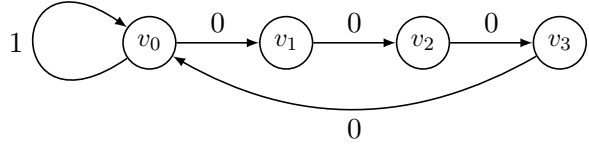

Hence, $X$ is a factor of the edge shift $X_A$. Let $\ell:\CL(X_A)\to\CL(X)$ be the block code. As done in~\cref{sec:sofic}, We use this presentation to effectively approximate the entropy of the perturbations of $d$-gap shift. This includes, in particular, perturbations by words starting and ending with 0, a previously unresolved case for general $\CS$-gap shifts.

Let $w\in\CL(X),$ and $\tilde{w}$ be defined as before. Clearly, $\tilde{w}$ has a unique pre-image, except when $\tilde{w}=\overline{0}^{kd}$ for some $k\ge 1$. Also, for $\tilde{w}\ne\overline{0}^{kd}$ for some $k\ge 1$, the unique word $u\in\ell^{-1}(\tilde{w})$ satisfies $s(u)=r(u)=v_0$ (as in the figure).  
We have the following result for $h(X_w)$. 

\begin{corollary}
    Let $d\ge 1$ and $w\in\mathcal{L}(X_d)$. Then, $h(X_w)=\ln(\lambda_w)$ where $\lambda_w$ is the largest real zero of
    \[
    g_w(z)= (z^d-z^{d-1}-1)c_u(z)+z^{d-1},
    \] where $u\in \CL(X_G)$ is such that $u\in\ell^{-1}(\tilde{w})$.
\end{corollary}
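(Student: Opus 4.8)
The plan is to pull the perturbation back along the factor map $\ell_\infty\colon X_A\to X_d$, reduce to a single--word perturbation of the edge shift $X_A$ (to which the explicit formula~\eqref{eq:SFT_one} applies), and then evaluate the two polynomials attached to $A$. Since $X_w=X_{\tilde{w}}$, I replace $w$ by $\tilde{w}$ and let $u\in\ell^{-1}(\tilde{w})$ be the preimage with $s(u)=r(u)=v_0$; this is the unique preimage when $\tilde{w}\neq\overline{0}^{kd}$. The first point is that forbidding $u$ in $X_A$ is the same as forbidding $\tilde{w}$ in $X_d$, i.e.\ $(X_A)_u=\ell_\infty^{-1}(X_w)$. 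When $\tilde{w}\neq\overline{0}^{kd}$ this is immediate: $u$ is the only walk labeled $\tilde{w}$, so $\ell_\infty(y)$ avoids $\tilde{w}$ exactly when $y$ avoids $u$. In the remaining case $\tilde{w}=\overline{0}^{kd}$ (equivalently $w=\overline{0}^{j}$, the SFT $X_w=X_{\CS_0}$ from the start of the section) there are $d$ preimages, but forbidding the one based at $v_0$ already suffices: the only edge entering $v_i$ for $i\ge1$ is labeled $0$, so the first $0$ of any maximal $0$--run is read at $v_0$, whence every walk spelling $\overline{0}^{kd}$ contains the $v_0$--based walk $u$.

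Granting $(X_A)_u=\ell_\infty^{-1}(X_w)$, the map $\ell_\infty$ restricts to a surjective factor map $(X_A)_u\to X_w$. Arguing as in the proof of~\cref{thm:sofic_entropy}, surjectivity gives $h((X_A)_u)\ge h(X_w)$, and the right--resolving property---each word of $X_w$ lifts to at most $d$ words of $X_A$, hence of $(X_A)_u$---gives $h((X_A)_u)\le h(X_w)$, so $h(X_w)=h((X_A)_u)$. As $X_A$ is an SFT, \eqref{eq:SFT_one} yields $h((X_A)_u)=\ln(\lambda_u)$, with $\lambda_u$ the largest real zero of $\det(zI-A)\,c_u(z)+Adj_{i,j}(zI-A)$, where $i=s(u)=v_0$ and $j=r(u)=v_0$.

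It remains to evaluate these for the matrix $A$. I expand $\det(zI-A)$ along the first column, whose only nonzero entries are $z-1$ in the $v_0$--row and $-1$ in the $v_{d-1}$--row. Deleting the $v_0$--row and column leaves an upper--triangular matrix with diagonal $z$, of determinant $z^{d-1}$; a short sign computation shows that the $-1$ in the $v_{d-1}$--row contributes $-1$ to the expansion. Hence $\det(zI-A)=(z-1)z^{d-1}-1=z^d-z^{d-1}-1$, matching the Perron root of $X_d$. Moreover $Adj_{0,0}(zI-A)$ is the $(v_0,v_0)$ cofactor, which is exactly the upper--triangular minor just computed, so $Adj_{0,0}(zI-A)=z^{d-1}$. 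Substituting $i=j=v_0$ gives $\det(zI-A)c_u(z)+Adj_{0,0}(zI-A)=(z^d-z^{d-1}-1)c_u(z)+z^{d-1}=g_w(z)$, and therefore $h(X_w)=\ln(\lambda_w)$ with $\lambda_w$ the largest real zero of $g_w$.

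The main obstacle is precisely the all--zeros case $\tilde{w}=\overline{0}^{kd}$, where the labeling fails to be injective on $\ell^{-1}(\tilde{w})$: there the equivalence between forbidding $u$ and forbidding $\tilde{w}$ is no longer formal and has to be extracted from the combinatorics of $G_A$, and one must check that the $v_0$--based preimage (not another rotation of the $0$--cycle) is the correct one---choosing a different base vertex would shift the forbidden gap length and yield the wrong entropy. By contrast, the determinant and adjugate evaluations, although the formal heart of the identity, are routine cofactor expansions once the indices $i=j=v_0$ are fixed.
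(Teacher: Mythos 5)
Your argument for the main case $\tilde{w}\neq\overline{0}^{kd}$ is exactly the paper's: pull back along the right-resolving presentation, use the unique lift $u$ with $s(u)=r(u)=v_0$, apply~\cref{eq:SFT_one}, and evaluate $\det(zI-A)=z^d-z^{d-1}-1$ and $Adj_{v_0,v_0}(zI-A)=z^{d-1}$ (your cofactor computations are correct). Where you genuinely diverge is the degenerate case $\tilde{w}=\overline{0}^{kd}$: the paper abandons the factor-map route there, instead computing $h(X_w)$ directly from the $\CS$-gap entropy equation $\sum_{i<k}z^{-(di+1)}=1$ and then verifying algebraically that this root coincides with the largest zero of $g_w$, using that every lift of $\overline{0}^{kd}$ has the same correlation polynomial $z^{kd-1}+\dots+z^{d-1}$. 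You instead show that forbidding only the $v_0$-based lift already realizes $\ell_\infty^{-1}(X_w)$, which lets~\cref{eq:SFT_one} cover both cases uniformly; this is arguably cleaner, and it also explains something the paper leaves implicit, namely why the constant term must be $Adj_{v_0,v_0}=z^{d-1}$ rather than the cofactor at another base vertex (these genuinely differ, e.g.\ $z$ versus $z-1$ when $d=2$, so the choice matters). One small point to tighten: your justification ``the first $0$ of any maximal $0$-run is read at $v_0$'' presumes the run has a left endpoint, but since $\CS$ is infinite $X_d$ contains sequences with a left-infinite string of zeros; for such a run there is no first $0$, and you should instead note that among any $d$ consecutive starting positions inside the run one begins at $v_0$, so a $v_0$-based occurrence of length $kd$ still appears. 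With that one-line patch your argument is complete and correct.
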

\begin{proof}
When $w\ne\overline{0}^{kd}$ for some $k\ge 1$, without loss of generality, assume that $\tilde{w}=w$. Let $\CG=(G,\ell)$ be the labeled graph representation of $X$ as defined above by the matrix $A$. Let $v_0$ be the unique vertex of $\CG$ that has a loop labeled $1$. Then, there exists a unique $u\in\CL(X_G)$ such that $s(u)=r(u)=v_0$ and $u=\ell^{-1}(w)$. Since $X_G$ is a subshift of finite type, by ~\cref{eq:SFT_one}, $h(X_w)=\ln(\lambda_w)$ where $\lambda_w$ is the largest real zero of
\[
 \det(zI-A)c_u(z)+Adj_{v_0,v_0}(zI-A)=(z^d-z^{d-1}-1)c_u(z)+z^{d-1}.
\]

When $w=\overline{0}^{n}$ for some $n=kd$, $k\ge 1$, then $X_w=X_{\CS}$ where $\CS_0=\{0,d,\dots,kd\}.$ Here, $h(X_w)=\ln(\lambda_w)$ where $\lambda_w$ is the largest real root of 
\[
\sum_{i=0}^k\frac{1}{z^{di+1}}-1=\frac{z^{(k+1)d+1}-1}{z^{kd}(z^d-1)}-1.
\]
We can easily verify that it is the largest zero of $(z^d-z^{d-1}-1)(z^{kd-1}+z^{(k-1)d-1}+\dots+z^{d-1})+z^{d-1}$. On the other hand, for any $u\in\ell^{-1}(w)$, we have $c_u(z)=z^{kd-1}+z^{(k-1)d-1}+\dots+z^{d-1}$.

\end{proof}


\begin{remarks}
    \begin{enumerate}
        \item When $w\ne 0u0$ for some $u$, this expression can also be obtained using~\cref{cor:S_gap}. 
        Imitating the proof of~\cref{thm:Sgap_exp}, we get an exponential decay of the entropy.
        \item \emph{For $w\in\mathcal{L}(X_d)\setminus\{1,10,\dots,1\overline{0}^d,01,\dots,\overline{0}^d1 \}$, we have $0<h(X_w)<h(X)$:} Note that $c_w(1)=1$ if and only if $c_w(z)=z^n$ where $n+1$ is the length of $w$. In this case, for $n\ge d+1$, $g_w(1)=0$, $g_w'(1)<0$ and $\lim_{z\to\infty}g_w(z)=\infty$. Hence $\lambda_w>1.$
  When $c_w(z)\ne z^n$ for any $n\ge 0$,  
  we can check that $g_w(1)=1-c_w(1)< 0$ and $g_w(\lambda_d)=\lambda_d^{d-1}>0.$ 
  \item \emph{$\min_{w\in\CL_n(X)}(h(X_w))=h(X_u)$ where $u=\overline{1}^{n-d}\ \overline{0}^{d}$:} Note that for any $w\in\CL(X)$ and $z>1$, $g_u(z)\le g_w(z)$.  
  
  \end{enumerate}
\end{remarks}

\section{Multi-word perturbations of subshifts of finite type}\label{sec:multi-wordSFT}
In this section, we consider the perturbations of subshifts of finite type by more than one word. 
From Section~\ref{sec:sofic}, clearly, in order to study the drop in entropy of perturbations of the sofic shift by an allowed word, it is important to know the drop in entropy of perturbations of subshifts of finite type by multiple allowed words. In the following result, we give an explicit expression. 

Let $A$ be an $N\times N$ irreducible matrix of non-negative integers, $G_A$ be the associated directed graph, and $X=X_A$ be the irreducible subshift of finite type associated with $G_A=(V,E)$, where $V=\{1,\dots,N\}$. For a word $w\in\CL(X)$, let $s(w),r(w)\in V$ be the source and range maps of $w$, respectively.  

Let $\CK=\{w_1,\dots,w_k\}\subseteq\CL(X)$ be a reduced collection of allowed words, that is, for $i\ne j$, $w_i\not\prec w_j$. We denote, $X_{\CK}$ to be the perturbation of $X$ by $\CK$. Let $f(n)=\#\CL_n(X_{\CK})$ and $F(z)=\sum_{n=0}^\infty f(n)z^{-n}$. We have the following result on the rational form of $F(z)$. For $1\le s,t\le N$, let $\chi_{s,t}=1$ if $s=t$ and $\chi_{s,t}=0$, otherwise. 
\begin{theorem}\label{thm:SFT_Multi}
    $F(z)=\sum_{i=1}^NP^{-1}(z)_{i,1}$ where 
    \[
    P(z)= \begin{pmatrix}
        zI-A&R(z)\\S(z)&-z\CM(z)
    \end{pmatrix},
    \] where $I$ is the identity matrix of order $N$, $R(z)$ is an $N\times k$ matrix such that $R(z)_{i,j}=z\chi_{i,r(w_j)}$, $S(z)$ is a $k\times N$ matrix such that $S(z)_{i,j}=\chi_{j,s(w_i)}$ and $\CM(z)_{i,j}=(w_j,w_i)_z$. Moreover, $h(X_{\CK})=\ln(\lambda_{\CK})$ where $\lambda_{\CK}$ is the largest real pole of $F(z)$. 
\end{theorem}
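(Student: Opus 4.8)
The plan is to generalize the single-word generating-function argument from the proof of~\cref{thm:S_gap} to the multi-word setting, tracking walks in $G_A$ by their terminal vertex. First I would introduce auxiliary counting functions: for each vertex $t\in V$, let $f_t(n)$ be the number of length-$n$ walks in $G_A$ (equivalently words in $\CL_n(X_\CK)$, once I identify the SFT with its edge presentation) that end at vertex $t$ and avoid every $w\in\CK$ as a subword. Separately, for each forbidden word $w_i$, let $g_i(n)$ count the walks of length $n$ in which $w_i$ occurs, for the first time, precisely at the very end (so the walk is otherwise $\CK$-avoiding up to that terminal occurrence). These $g_i(n)$ play the role of $g_w(n)$ in~\cref{thm:S_gap}, and the vector $(f_t)_t$ together with $(g_i)_i$ will satisfy a coupled linear system that closes up.

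The key structural recurrences are two families. The first records how a $\CK$-avoiding walk ending at vertex $j$ extends by one edge: appending an edge from $j$ to $t$ either produces another $\CK$-avoiding walk counted in $f_t(n+1)$, or completes a first terminal occurrence of some $w_i$ with $r(w_i)=t$, counted in $g_i(n+1)$. Summing the adjacency constraint $A_{j,t}$ over $j$ gives, after multiplying by $z^{-n}$ and summing, a relation of the shape $zF_t(z)=\sum_j A_{j,t}F_j(z)+z\sum_{i:\,r(w_i)=t}G_i(z)$ up to initial-condition corrections; this is exactly what the blocks $zI-A$ and $R(z)$ in $P(z)$ encode, with $R(z)_{i,j}=z\chi_{i,r(w_j)}$ routing each $g$-generating function into the vertex where its word terminates. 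The second family records, as in the correlation argument of~\cref{thm:S_gap}, that a $\CK$-avoiding walk ending at $s(w_i)$ followed by the full word $w_i$ produces a first occurrence of \emph{some} $w_j$ at a shifted position governed by the overlap of $w_i$ with $w_j$; summing over overlaps yields $F_{s(w_i)}(z)=\sum_j (w_j,w_i)_z\,G_j(z)$ up to a low-order correction. The matrix $\CM(z)_{i,j}=(w_j,w_i)_z$ and the selector $S(z)_{i,j}=\chi_{j,s(w_i)}$ assemble precisely this, and the $-z\CM(z)$ placement in $P(z)$ reflects the $z$-weighting from appending $w_i$.

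Assembling both families into the single block system $P(z)\,\mathbf{v}(z)=\mathbf{c}(z)$, where $\mathbf{v}(z)=(F_1,\dots,F_N,G_1,\dots,G_k)^{T}$ and $\mathbf{c}(z)$ is a constant (in $z$) vector arising from the finitely many initial conditions $f_t(0),g_i(0)$, I solve by Cramer's rule. The total generating function is $F(z)=\sum_{t}F_t(z)$, and after checking that the inhomogeneity $\mathbf{c}(z)$ reduces to the first standard basis contribution (because the only nonempty-walk seed is the single empty walk at the distinguished starting configuration), the solution collapses to $F(z)=\sum_{i=1}^N P^{-1}(z)_{i,1}$. Finally, $h(X_\CK)=\ln(\lambda_\CK)$ follows from $f(n)\asymp \lambda_\CK^{n}$ by the standard generating-function/Perron argument: the radius of convergence of $F(z)$ (in the variable $z^{-1}$) is controlled by the largest modulus pole, which for a nonnegative irreducible system is a simple positive real pole $\lambda_\CK$, and $\frac1n\ln f(n)\to\ln\lambda_\CK$.

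\textbf{The main obstacle} I expect is bookkeeping the initial conditions and the ``first occurrence at the end'' convention so that the recurrences close \emph{exactly} into $P(z)$ rather than up to stray polynomial terms; in particular, justifying that the correction vector $\mathbf{c}(z)$ is genuinely $z$-independent and concentrated so that $F(z)=\sum_i P^{-1}(z)_{i,1}$ (and not some other linear combination) requires careful handling of the empty word and of short walks whose length is below $\min_i|w_i|$. A secondary subtlety is the reduced-collection hypothesis $w_i\not\prec w_j$: this is what guarantees that ``first terminal occurrence of $w_i$'' is unambiguous and that the overlap matrix $\CM(z)$ correctly captures all ways a completion can re-trigger a forbidden word, so I would verify that no double counting occurs precisely because no $w_i$ sits inside another.
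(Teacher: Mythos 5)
Your proposal follows essentially the same route as the paper's proof: the same auxiliary counts $f(n,j)$ (walks ending at vertex $j$ avoiding $\CK$) and $g(n,v)$ (first occurrence of $v\in\CK$ exactly at the end), the same two families of recurrences from single-edge extension and from appending a full forbidden word with overlap bookkeeping via $(w_j,w_i)_z$, assembled into the block system $P(z)\mathbf{v}(z)=ze_1$ and concluded via the largest real pole of $F(z)$. The obstacles you flag (initial conditions, the role of the reduced-collection hypothesis) are exactly the points the paper handles, so the proposal is correct and matches the paper's argument.
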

\begin{proof}
For $n\ge 0$ and $1\le j\le N$, let $f(n,j)$ be the number of words in $\CL_n(X_\CK)$ that end at the vertex $j$, when these words are considered as walks in $G_A$. For $v\in\CK$, let $g(n,v)$ be the number of words in $\CL_n(X)$ that end with $v$ and do not contain any appearance of words from $\CK$ except $v$ at the end. We assume $f(0,1)=1, f(0,j)=0$ for $1<j\le N$, and $g(0,v)=0$ for all $v\in\CK$. For $1\le i\le N$, let $F_i(z)=\sum_{n=0}^\infty f(n,i)z^{-n}$ and for $v\in\CK$, let $G_v(z)=\sum_{n=0}^\infty g(n,v)z^{-n}$.

We get the recurrence relations using arguments similar to those in~\cite{Combinatorial}. 
For each word counted in $f(n,j)$, we attach all the symbols corresponding to the edges from $j$ to $i$ at the end. There are $f(n,j)A_{j,i}$ many such words. These words are either counted in $f(n+1,i)$ or counted in $g(n+1,v)$ for some $v\in\CK$ where $r(v)=i$. Hence for each $1\le i\le N$, 
\[
\sum_{j=1}^Nf(n,j)A_{j,i}=f(n+1,i)+\sum_{v\in \CK}\chi_{i,r(v)}g(n+1,v).
\]
 Now, for the second set of equations, fix $v\in\CK$. Consider a word counted in $f(n,j)$ and attach $v$ at the end. This attachment is admissible only if $\chi_{j,s(v)}=1$. By tracking the first place a word from $\CK$ appears, we get terms from the nonempty correlations. Hence,   \[
        \sum_{j=1}^Nf(n,j)\chi_{j,s(v)}=\sum_{t\in(u,v)}g(n+t,u).
   \] We get the following system of linear equations by multiplying these equations by $z^{-n}$ and adding from $0$ to $\infty$;
   \[
   P(z)\begin{pmatrix}
       F_1(z)\\\vdots\\ F_N(z)\\G_{w_1}(z)\\\vdots\\ G_{w_k}(z)
   \end{pmatrix}=\begin{pmatrix}
       z\\0\\\vdots\\0
   \end{pmatrix}.
   \] Using the formula for the radius of convergence of $F(z)$, it is very easy to see that if $\lambda_{\CK}$ is the largest real pole of $F(z)$, then $h(X_{\CK})=\ln(\lambda_{\CK})$. 
\end{proof}
\begin{remark}
    When $X$ is a full shift of $N$ symbols, we have $A=(N)$ and we get back the expression as in~\cite{Combinatorial} where $F(z)=\dfrac{1}{z-N+s(z)}$ where $s(z)$ is the sum of entries of $\CM^{-1}(z)$. When $X$ is an irreducible SFT and $\#\CK=1$, we get back~\cref{eq:SFT_one}.
\end{remark}

\begin{remark}\label{rem:SFT_Exp}

By~\cref{thm:SFT_Multi}, in order to get the entropy of the perturbed system, we are interested in finding the largest real root of determinant of $P(z)$. As $P(z)$ has a block form, we have
\[
\det(P(z))=m(z)^{-k+1}\det(-zm(z)\CM(z)+Q(z)),
\] where $m(z)=\det(zI-A)$ is the characteristic polynomial of $A$ and $Q(z)=S(z)\text{Adj}(zI-A)R(z)$. Since the largest real root $\lambda_{\CK}$ is strictly smaller that the Perron root $\lambda_A$ of $A$, we are interested in the largest real root of $\det(-zm(z)\CM(z)+Q(z))$. Expanding this, we get
\[\det(-zm(z)\CM(z)+Q(z))=\det(-zm(z)\CM(z))+L(z)=(-zm(z))^k\det(\CM(z))+L(z),\] 
where $L(z)$ are the rest of the terms. Write, $m(z)=(\lambda_A-z)m'(z)$ where $m'(\lambda)\ne 0$ (this is possible as $\lambda_A$ is a simple root of $A$). Then, at $z=\lambda_{\CK}$, 
\[
(\lambda_A-\lambda_{\CK})^k=\frac{-L(\lambda_{\CK})}{(-\lambda_{\CK})^k\det(\CM(\lambda_{\CK}))}.
\]

Note that the diagonal entries of $\CM(z)$ has leading term $z^{n-1}$ and off-diagonal entries have leading term at most $z^{n-2}$. Using this, 
    one may show that $|\lambda_A-\lambda_\CK|=O(\lambda_A^{-n})$ for $n$ large which will imply $h(X_\CK)-h(X)=O(\lambda_A^{-n})$ for $n$ large enough. The argument is similar to the proof of~\cref{thm:Sgap_exp}. However, due to the extreme complexity of the calculations involved in this approach, we do not present a detailed proof. The provided details merely serve to validate the exponential decay of the drop in entropy. Nevertheless, in the following section, we provide a special case of this when the ambient space is a full shift. 
\end{remark}
\subsection{Exponential decay rate for multi-word perturbation of full shift}
Let $X$ be the full shift on $N$ symbols ($\CA=\{1,\dots,N\}$).
We look at the special case of multi-word perturbation on $X$ and explore the exponential decay rate of drop in entropy when the length of the forbidden words increases. 

Let $(\CK_n)_{n\in\N}$ be a sequence where each $\CK_n=\{w_{1,n},\dots,w_{k,n}\}$ is a collection of $k$ many words in $\mathcal{L}_n(X)$ such that $w_{i,n}$ is a prefix of $w_{i,n+1}$ for all $n\ge 1$ and $i=1,\dots,k$. For each $n$, let $h(X_{\CK_n})=\ln(\lambda_n)$. Then $\lambda_n$ is the largest (real) root of $z-N+s_n(z)$ where $s_n(z)$ is the sum of entries of $\mathcal{M}_{n}(z)^{-1}=\mathcal{M}_{\CK_n}(z)^{-1}$. 

As $n$ goes to infinity, each $i$ gives a unique infinite sequence in $\CA^{\N}$ and we denote it by $x_i\in \CA^{\N}$.
Also, by~\cref{rem:SFT_Exp}, it is expected that $h(X_{\CK_n})$ converges to $h(X)$ exponentially, as $n\to\infty$.
Hence, we are interested in finding 
\[
\lambda(x_1,\dots,x_k):=\lim_{n\to\infty}\dfrac{h(X)-h(X_{\CK_n})}{N^{-n}}=\lim_{n\to\infty}\dfrac{\ln(N)-\ln(\lambda_n)}{N^{-n}},
\] if the limit exists.

In this section, we obtain an explicit expression for $\lambda(x_1,\dots,x_k)$. Let $S:\CA^{\N}\to\CA^{\N}$ be the usual shift map. A point $x\in\CA^{\N}$ is \emph{periodic}, if there exists $s>0$ such that $S^sx=x$. For $x,y\in\CA^{\N}$, whenever we mention that there exists $m>0$ such that $S^mx=y$, we always mean that $m$ is the least positive integer with this property. 
The following key lemma will be used to prove the main result. 
\begin{lemma} \label{lemma:lim}
	With the notations as above, for $z>1$ and $1\le i\ne j\le k$,
	 \[ \lim_{n\to\infty}z^{-(n-1)}(w_{i,n},w_{i,n})_z=\begin{cases} 
	1 & \text{if $x_i$ is non-periodic} \\
	\frac{1}{1-z^{-s}} & \text{if $x_i$ is periodic with period $s$},
	\end{cases}
	\]
	\[ \lim_{n\to\infty}z^{-(n-1)}(w_{i,n},w_{j,n})_z=
	\begin{cases} 
	0 & \text{if $S^m x_i\ne x_j$ for any $m>0$} \\
	z^{-m} & \text{if $S^m x_i=x_j$ for some $m>0$ and $x_i,x_j$ are non-periodic} \\
	\frac{z^{-m}}{1-z^{-s}} & \parbox{11cm}{if $S^mx_i=x_j$ for some $m>0$, $x_i$ is non-periodic and $x_j$ is periodic with period $s$}\\
	\frac{z^{-m}}{1-z^{-t}} & \text{if $S^mx_i=x_j$ for some $m>0$ and $x_i$ is periodic with period $t$} .
	\end{cases}\]
\end{lemma}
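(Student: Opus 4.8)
The plan is to translate both correlation polynomials into a purely combinatorial statement about matches of the prefixes of $x_i$ against those of $x_j$ (or against itself), normalize by $z^{-(n-1)}$, and then show that the normalized sums converge to a geometric series whose tail is controlled uniformly in $n$.

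First I would record two reformulations read off from \cref{def:corr}. For the self-correlation, $\ell\in(w_{i,n},w_{i,n})$ exactly when $w_{i,n}$ has a border of length $\ell+1$, equivalently when $p:=n-1-\ell$ is a \emph{period} of $w_{i,n}$ (meaning $(x_i)_r=(x_i)_{r+p}$ for $1\le r\le n-p$); hence $z^{-(n-1)}(w_{i,n},w_{i,n})_z=\sum_{p}z^{-p}$, the sum ranging over all periods $0\le p\le n-1$ of $w_{i,n}$. For the cross-correlation, writing $m=n-1-\ell$, the coefficient of $z^{\ell}$ equals $1$ iff $S^m x_i$ and $x_j$ agree on their first $n-m$ coordinates, so $z^{-(n-1)}(w_{i,n},w_{j,n})_z=\sum_{m=0}^{n-1}\mathbf{1}[\,S^m x_i \text{ and } x_j \text{ agree on } [1,n-m]\,]\,z^{-m}$. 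Note that $m=0$ never contributes to the cross term: the collection $\CK_n$ being reduced forces $w_{i,n}\ne w_{j,n}$, hence $x_i\ne x_j$.

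In the \emph{aperiodic} cases (self-correlation with $x_i$ non-periodic; cross-case $S^m x_i\ne x_j$ for all $m>0$; and the unique-shift case with $x_i,x_j$ non-periodic) the genuine limiting matches are finite in number: only $p=0$ survives in the self-tail, none in the first cross-case, and the single index $m$ in the second. Here I would show that the smallest \emph{spurious} index (a period $p>0$, respectively a matching shift other than the genuine one) tends to $\infty$ with $n$: if it stayed bounded along a subsequence, passing to the limit would produce a nontrivial period of $x_i$, respectively a shift equality $S^{m'}x_i=x_j$, contradicting non-periodicity respectively minimality/uniqueness of $m$. Since every spurious index then exceeds a threshold $M(n)\to\infty$, its contribution is at most the geometric tail $\sum_{p\ge M(n)}z^{-p}=z^{-M(n)}/(1-z^{-1})\to0$, giving the stated limits $1$, $0$, and $z^{-m}$.

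The \emph{periodic} cases (self-correlation with least period $s$; and the cross-cases where $x_j$ or $x_i$ is periodic) are where the genuine matches are infinite and form an arithmetic progression: the periods of $w_{i,n}$ contain every multiple $ks\le n-1$, and the matching shifts contain $m+ks$ (when $x_j$ has period $s$) or $m+kt$ (when $x_i$ has period $t$) for all $k\ge0$. Summing these yields exactly $1/(1-z^{-s})$, $z^{-m}/(1-z^{-s})$ and $z^{-m}/(1-z^{-t})$. The crux is ruling out the spurious matches outside this progression, for which I would invoke Fine and Wilf's periodicity theorem: a period of $w_{i,n}$ that is not a multiple of $s$, combined with the genuine period $s$, would force a strictly smaller period of the length-$n$ prefix unless the two periods fail the Fine--Wilf length condition, which pins such a spurious period into the window $(n-s,\,n-1]$; after peeling off the fixed shift $m$ and reducing to a self-period statement for $x_j$, the same bound confines spurious cross-shifts to $m'>n-s$ (resp. $m'>n-t$). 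Each such window has width at most $s$ (resp. $t$) and contributes at most $s\,z^{-(n-s)}\to0$. The main obstacle is precisely this periodic bookkeeping: separating the arithmetic progression of genuine overlaps from the bounded Fine--Wilf-forbidden window near position $n$, together with the preliminary check that the least period of the prefixes of a periodic $x_i$ stabilizes to $s$ so that Fine--Wilf is applicable.
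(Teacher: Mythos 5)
Your proposal is correct and follows the same skeleton as the paper's proof: expand $z^{-(n-1)}(w_{i,n},w_{j,n})_z$ as a sum over overlap shifts $m$ with $0/1$ coefficients, identify the genuine matches (the shift $p=0$, the single shift $m$, or the arithmetic progression $m+ks$ in the periodic cases) as the source of the stated geometric series, and show the remaining terms vanish; the same four-case analysis, including the observations that a second matching shift would force periodicity of $x_j$ and that in the doubly periodic case the two least periods coincide, is exactly what the paper does. Where you genuinely differ is in how the spurious terms are killed. The paper argues coefficient by coefficient: for each \emph{fixed} shift $m'$ that is not a genuine match there is $N_{m'}$ beyond which $b_{n,m'}=0$, and the limit is then passed inside the infinite sum (tacitly via the summable majorant $\sum_m z^{-m}$ for $z>1$). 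You instead derive a uniform estimate, invoking Fine--Wilf to confine every spurious period or shift to a window of width at most $s$ (resp.\ $t$) near position $n$, contributing $O(z^{-(n-s)})$. Your route is heavier than necessary --- once each coefficient is known to converge, dominated convergence already justifies the interchange without any periodicity combinatorics --- but it makes explicit the uniformity that the paper leaves implicit, and the preliminary check you flag (that the least period of the prefixes of a periodic $x_i$ stabilizes to $s$) is precisely the hypothesis needed for Fine--Wilf to bite. Both arguments are sound and yield the same limits.
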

\begin{proof}
	Fix $1\le i\ne j\le k$. Let $x=x_i$ and $y=x_j$, $u_n=w_{i,n}$, and $w_n=w_{j,n}$.
	Consider the correlation polynomials $a_n(z)=(u_n,u_n)_z$ and $b_n(z)=(u_n,w_n)_z$. The limit of $z^{-n+1}a_n(z)$ is shown in~\cite[Theorem 4.6.1]{BY}. However, we provide a proof for completion. \\	
	Let the correlation polynomial of $u_n$ with itself be
	\[
	a_n(z)=z^{n-1}+\sum_{i=1}^{n-1}a_{n,i}z^{n-1-i},
	\]
	where $a_{n,i}$ is either 0 or 1. When $x$ is non-periodic, $\lim_{n \to \infty}a_{n,i}=0$ for all $i$, and this implies that $\lim_{n \to \infty}\sum_{i=1}^{n-1}a_{n,i}z^{-i}=0$. Similarly when $x$ is periodic with period $s$, $\lim_{n \to \infty} \sum_{i=1}^{n-1}a_{n,i}z^{-i}=\sum_{j=1}^\infty z^{-js}=\frac{1}{1-z^{-s}}-1$, hence, we have the required limit. 
    
	Let $b_n(z)=\frac{1}{z}\sum_{i=1}^{n-1}b_{n,i} z^{n-i}$ be the cross-correlation $(u_n,w_n)_z$. We consider different cases. 
	
\noindent	\textbf{Case 1}. \emph{$S^m x\ne y$ for any $m>0$}: As $S^mx\neq y$ for any $m>0$, there exists $N_m$ such that $x_{m+1}x_{m+2}\dots x_{m+N_m}\neq y_1y_2\dots y_{N_m}$. Hence, for every $n\ge N_m$, we have $b_{n,m}=0$. Therefore,
$\lim_{n\to\infty}z^{-n+1}b_n(z)=0.$

\noindent \textbf{Case 2}. \emph{$x,y$ are non-periodic and there exists $m>0$ such that $S^mx=y$}:  Assume that $m$ is the least positive integer where $S^mx=y$. In this case, $b_{n,m}=1$ for all $n>0$. Let $m'>m$ be such that $S^{m'}x=y$. Then $S^{m'-m}y=S^{m'-m}S^mx=S^{m'}x=y$ implies that $y$ is periodic and that is a contradiction. Therefore, for $m\ne m'$, there exists $N_{m'}>0$ such that for $n\ge N_{m'}$, we have $b_{n,m'}=0$. Therefore  
$\lim_{n\to\infty}z^{-n+1}b_n(z)=z^{-m}.$

\noindent \textbf{Case 3}. \emph{$x$ is non-periodic, $y$ is periodic with period $t$ and there exists $m>0$ such that $S^mx=y$}: Let $t$ be the least period of $y$. Hence, if $S^\ell y=y$ for some $\ell>0$, then $\ell=pt$ for some $p\ge 1$. Also, assume that $m$ is the least positive integer where $S^mx=y$. 
In this case, $b_{n,m}=1$ for all $n>0$.
Let $m'>m$ be such that $S^{m'}x=y$. Then $S^{m'-m}y=S^{m'-m}S^mx=S^{m'}x=y$. Hence $m'=m+pt$ for some $p\ge 1$. That means for any $m'$ that is not of the form $m+pt$ for some $p\ge 1$, there exists $N_{m'}>0$ such that for $n\ge N_{m'}$, we have $b_{n,m'}=0$. Therefore,  
\[\lim_{n\to\infty}z^{-n+1}b_n(z)=z^{-m}\sum_{j=0}^\infty z^{-jt}=\frac{z^{-m}}{1-z^{-t}}.\]

\noindent \textbf{Case 4}. \emph{$x$ is periodic with period $s$, and there exists $m>0$ such that $S^mx=y$}: Let $s$ be the least period of $x$ and $m$ be the least positive integer where $S^mx=y$. Observe the following in this case.  
 
 a) $m< s$; otherwise, $y=S^{m-s}S^s x=S^{m-s}x$ contradicts the fact that $m$ is the least integer with this property. 
 
 b) $S^sy=S^{s+m}x=S^mx=y$ implies $y$ is periodic. If $t$ is the period of $y$, then $t\le s$. 
 
 c) $S^{s-m}y=S^{s-m+m}x=x$. Let $\ell\le s-m$ be such that $S^\ell y=x$. Then $S^{\ell+m}x=x$ implies $s\le \ell+m$. Hence $\ell=s-m$. Hence, $s-m$ is the minimum integer such that $S^{s-m}y=x$. 
 
 d) $S^t x=S^tS^{s-m}y=S^{s-m}y=x$ implies $s\le t$ as $s$ is the least period of $x$. Therefore $s=t$. That is $y$ is a periodic point with period $s$. 
 
 Hence, similar to the argument in the last case, if $m'>m$ is such that $S^{m'}x=y$, then $m'=m+ps$ for some $p\ge 1$. Hence for any $m'$ that is not of the form $m+ps$ for some $p\ge 1$, there exists $N_{m'}>0$ such that for $n\ge N_{m'}$, we have $b_{n,m'}=0$. Therefore,  
 \[\lim_{n\to\infty}z^{-n+1}b_n(z)=z^{-m}\sum_{j=0}^\infty z^{-js}=\frac{z^{-m}}{1-z^{-s}}.\]
\end{proof}
 Let $s_n(z)$ be the rational function that is the sum of the entries of $\CM_n^{-1}(z),$ where $\CM_n(z)=[(w_{j,n},w_{i,n})_z]_{i,j}$ is the correlation matrix function. Let $\alpha(z)=\lim_{n\to\infty}z^{-n}\CM_n(z)$. Firstly, we have the following result.   	

\begin{lemma}\label{lemma:inv_alpha}
    $\alpha(z)$ is invertible for all $z>\#\CK$.
\end{lemma}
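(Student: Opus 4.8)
The plan is to prove that the limit matrix $\alpha(z)$ is strictly diagonally dominant by rows whenever $z>\#\CK$, and then to invoke the Levy--Desplanques theorem (equivalently, Gershgorin's circle theorem), which asserts that a strictly diagonally dominant matrix is nonsingular. Since $\alpha(z)=\lim_{n\to\infty}z^{-n}\CM_n(z)$ and $\CM_n(z)_{i,j}=(w_{j,n},w_{i,n})_z$, the entries of $\alpha(z)$ can be read off directly from~\cref{lemma:lim} after pulling out one factor of $z^{-1}$, namely $\alpha(z)_{i,j}=z^{-1}\lim_{n\to\infty}z^{-(n-1)}(w_{j,n},w_{i,n})_z$. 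In particular every entry is a non-negative real number for $z>1$, being a limit of polynomials with coefficients in $\{0,1\}$ divided by positive powers of $z$.

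First I would bound the diagonal from below. By~\cref{lemma:lim}, the diagonal limit $\lim_n z^{-(n-1)}(w_{i,n},w_{i,n})_z$ equals either $1$ (when $x_i$ is non-periodic) or $\frac{1}{1-z^{-s}}\ge 1$ (when $x_i$ is periodic), so $\alpha(z)_{i,i}\ge z^{-1}$. Next I would bound the off-diagonal entries uniformly from above. Inspecting the four cases of~\cref{lemma:lim}, every nonzero off-diagonal limit is of the form $z^{-m}$ or $\frac{z^{-m}}{1-z^{-s}}$ with $m\ge 1$ and $s\ge 1$; since $z^{-m}\le z^{-1}$ and $\frac{1}{1-z^{-s}}\le\frac{1}{1-z^{-1}}$ for $z>1$, each off-diagonal entry satisfies
\[
0\le\alpha(z)_{i,j}\le z^{-1}\cdot z^{-1}\cdot\frac{1}{1-z^{-1}}=\frac{1}{z(z-1)},\qquad i\ne j.
\]
The decisive point here is the strict inequality $m\ge 1$: it is exactly this full extra factor of $z^{-1}$ (rather than a factor $1$) that separates the off-diagonal scale $z^{-2}$ from the diagonal scale $z^{-1}$.

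Summing over the $k-1=\#\CK-1$ off-diagonal entries of row $i$ gives $\sum_{j\ne i}\alpha(z)_{i,j}\le (k-1)/\big(z(z-1)\big)$, whereas $\alpha(z)_{i,i}\ge 1/z$. Strict diagonal dominance of row $i$ therefore holds as soon as $\tfrac1z>\tfrac{k-1}{z(z-1)}$, that is, $z-1>k-1$, i.e. $z>k=\#\CK$. As this holds for every row, $\alpha(z)$ is strictly diagonally dominant and hence invertible for all $z>\#\CK$. The main care needed is in establishing the uniform off-diagonal bound simultaneously across all possible periods $s$ and shift amounts $m$; once that and the strictness $m\ge 1$ are in hand, the threshold $z>\#\CK$ falls out of the elementary dominance computation.
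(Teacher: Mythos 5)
Your proof is correct and follows essentially the same route as the paper's: both arguments read the entries of $\alpha(z)$ off \cref{lemma:lim}, use the fact that every nonzero off-diagonal limit carries a factor $z^{-m}$ with $m\ge 1$ to bound each off-diagonal entry by $\frac{1}{z(z-1)}$ while the diagonal is at least $z^{-1}$, and conclude strict diagonal dominance (hence invertibility) exactly when $z>\#\CK$. Your write-up is in fact slightly more careful than the paper's in making the condition $m\ge 1$ explicit for the off-diagonal terms.
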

\begin{proof}
Let $\#\CK=k$. Then $\alpha(z)$ is a matrix of order $k$. 
    Note that $\alpha(z)=\frac{1}{z}\beta(z)$ where the diagonal entries of $\beta(z)$ are of the form $1+\frac{z^{-m}}{1-z^{-\ell}}$ and the off-diagonal entries are of the form $\frac{z^{-m}}{1-z^{-\ell}}$ where $m,\ell\in\{0,\dots,\infty\}$ (we take $z^{-\infty}=0$). Also, for $z>1$, $\frac{z^{-m}}{1-z^{-\ell}}=\frac{z^{\ell-m}}{z^\ell-1}\le \frac{z^{\ell-1}}{z^\ell-1}\le \frac{1}{z-1}.$
    Hence, $\alpha_{i,i}(z)\ge 1\ge \frac{k-1}{z-1} \ge \sum_{j\ne i}\alpha_{i,j}(z).$ That is, $\alpha(z)$ is a strictly diagonally dominant matrix. This implies that $\alpha(z)$ is invertible. 
\end{proof}

\begin{lemma} Let $\alpha(N)$ be invertible. 
	With the notations as above, 
	\begin{equation}\label{eq:local_erate}
	\lambda(x_1,x_2,\dots,x_k)=
	\lim_{n\to\infty}\left(N^{n-1}s_n(N)+N^{n-2}s^2_n(N)+\dots\right).
	\end{equation}
\end{lemma}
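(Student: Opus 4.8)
The starting point is the defining relation for $\lambda_n$. By the expression $F(z)=1/(z-N+s_n(z))$ recorded for the full shift (the remark following \cref{thm:SFT_Multi}), the quantity $\lambda_n$ is the largest real root of $\phi_n(z):=z-N+s_n(z)$, so that $\phi_n(\lambda_n)=0$ gives the key identity $N-\lambda_n=s_n(\lambda_n)$. First I would record the two facts $\lambda_n<N$ and $\lambda_n\to N$: the strict inequality holds because forbidding words strictly reduces the growth rate of the full shift ($h(X_{\CK_n})<h(X)$), while the convergence follows from the asymptotics of $s_n$ near $N$. Indeed, since $z^{-n}\CM_n(z)\to\alpha(z)$ and $\alpha(N)$ is invertible by hypothesis (cf.\ \cref{lemma:inv_alpha}), continuity of matrix inversion gives $\CM_n(N)^{-1}=N^{-n}\bigl(\alpha(N)^{-1}+o(1)\bigr)$, and hence $s_n(N)=N^{-n}(\sigma+o(1))$, where $\sigma$ denotes the sum of the entries of $\alpha(N)^{-1}$. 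In particular $N^{n-1}s_n(N)\to\sigma/N$, which already shows that the right-hand limit in \eqref{eq:local_erate} exists.

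Next I would convert the logarithmic expression into a series in $s_n$. Writing $\lambda_n/N=1-s_n(\lambda_n)/N$ and expanding the logarithm, for $n$ large enough that $s_n(\lambda_n)/N<1$ one obtains
\[
\frac{\ln N-\ln\lambda_n}{N^{-n}}=N^n\sum_{j\ge1}\frac1j\left(\frac{s_n(\lambda_n)}{N}\right)^{j}=\sum_{j\ge1}\frac1j\,N^{\,n-j}s_n(\lambda_n)^{j}.
\]
The plan is then to show that in the limit only the $j=1$ term of this series survives, and likewise only the $j=1$ term of the target series $\sum_{j\ge1}N^{\,n-j}s_n(N)^{j}$ survives, so that both sides collapse to the common value $\lim_n N^{n-1}s_n(N)=\sigma/N$. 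For the tails I would use $s_n(N)=O(N^{-n})$, so $s_n(N)/N=O(N^{-n-1})$ and a geometric bound gives
\[
\sum_{j\ge2}N^{\,n-j}s_n(N)^{j}\le N^{n}\,\frac{(s_n(N)/N)^{2}}{1-s_n(N)/N}=O(N^{-n-2})\to0,
\]
and the identical estimate controls the tail of the logarithmic series (the factors $1/j\le1$ only help).

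It remains to replace $s_n(\lambda_n)$ by $s_n(N)$ in the surviving $j=1$ term. Here I would combine $|\lambda_n-N|=s_n(\lambda_n)=O(N^{-n})$ from the key identity with a derivative bound on $s_n$ near $N$: since $s_n(z)$ is a sum of entries of $\CM_n(z)^{-1}$, whose leading asymptotic is $z^{-n}\sigma(z)$ with $\sigma$ a fixed function analytic near $N$, differentiation gives $s_n'(z)=O(n\,N^{-n})$ uniformly on a fixed neighbourhood of $N$. By the mean value theorem $s_n(\lambda_n)-s_n(N)=O(n\,N^{-2n})$, whence $N^{n-1}\bigl(s_n(\lambda_n)-s_n(N)\bigr)=O(n\,N^{-n-1})\to0$. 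Combining the three steps yields
\[
\lambda(x_1,\dots,x_k)=\lim_{n\to\infty}N^{n-1}s_n(\lambda_n)=\lim_{n\to\infty}N^{n-1}s_n(N)=\lim_{n\to\infty}\sum_{j\ge1}N^{\,n-j}s_n(N)^{j},
\]
which is precisely \eqref{eq:local_erate}.

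I expect the main obstacle to be the uniform control of $s_n$ and its derivative near $z=N$, rather than the formal series manipulation, which is routine. \Cref{lemma:lim} supplies only the pointwise limits of the individual entries $z^{-(n-1)}(w_{i,n},w_{j,n})_z$; to pass to the inverse matrix and to differentiate I need the convergence $z^{-n}\CM_n(z)\to\alpha(z)$ to be locally uniform together with its derivative. This is where I would invest the care: each rescaled entry is a power series in $z^{-1}$ with $0$/$1$ coefficients that stabilise as $n\to\infty$, so convergence is in fact uniform with all derivatives on $\{|z|\ge 1+\delta\}$, and invertibility of $\alpha(N)$ then transfers this uniformity to $\CM_n^{-1}$ and to $s_n$. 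Once this uniformity is established, the derivative bound and the interchange of limit and summation above follow from standard estimates.
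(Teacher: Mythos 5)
Your proposal is correct and follows the same overall strategy as the paper: start from the identity $N-\lambda_n=s_n(\lambda_n)$, expand the logarithm, and reduce everything to showing that replacing the evaluation point $\lambda_n$ (or the mean-value point $c_n$) by $N$ inside $s_n$ costs nothing in the limit, which in both arguments comes down to a derivative estimate forcing $s_n'\to 0$ on $[\lambda_n,N]$. Where you genuinely diverge is in how that estimate is obtained. The paper writes $s_n=\mathcal{S}_n/\CD_n$, applies the quotient rule, and bounds the coefficients of $\CD_n$ and $\mathcal{S}_n$ explicitly via the Leibniz expansion of the determinant (getting $a_n=O(k!\,n^k)$, $b_n=O(k!\,k\,n^{k-1})$, hence $|s_n'(z)|\le M(z)n^{2k}z^{-n}$ on $[z_0,N]$). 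You instead upgrade the pointwise limits of \cref{lemma:lim} to locally uniform convergence of $z^{-n}\CM_n(z)\to\alpha(z)$ together with its derivative (using that the rescaled entries are $0/1$-coefficient series in $z^{-1}$ with uniformly summable tails on $|z|\ge 1+\delta$), and then transfer this through matrix inversion via the invertibility of $\alpha(N)$ to get $s_n(z)=z^{-n}\tau_n(z)$ with $\tau_n\to\tau$ in $C^1$, hence $s_n'(z)=O(nz^{-n})$. Your route avoids the combinatorial coefficient counting and makes the role of the hypothesis that $\alpha(N)$ is invertible more transparent; the paper's route is more elementary but yields a cruder power of $n$ in the bound, which is immaterial for the conclusion. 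Two small points in your favour: you correctly keep the $1/j$ factors in the logarithm expansion and then show that only the $j=1$ term of either series survives (the paper's displayed expansion silently drops the $1/j$, which is harmless for the limit but not literally an identity), and you rightly flag that the passage from pointwise to locally uniform convergence with derivatives is the place requiring care -- your tail-bound justification there is the right one and should be spelled out as you indicate. The only cosmetic imprecision is writing $s_n'(z)=O(nN^{-n})$ on a two-sided neighbourhood of $N$, where the honest bound on $[z_0,N]$ is $O(nz_0^{-n})$; since $z_0>1$ this still gives $N^{n-1}\bigl(s_n(\lambda_n)-s_n(N)\bigr)\to 0$, so nothing breaks.
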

\begin{proof}
 For each $n$, let $h(X_{\CK_n})=\ln(\lambda_n)$. Then, $\lambda_n$ is the largest real zero of $z-N+s_n(z)$. That is, $N-\lambda_n=s_n(\lambda_n)$. By mean value theorem, 
 \[
 N-\lambda_n=\frac{s_n(N)}{1-s_n'(c_n)},
 \] for some $\lambda_n<c_n< N$. Using the expansion for the logarithm, we get that 
 \[
-\ln\left(\frac{\lambda_n}{N}\right)=\frac{s_n(N)}{N(1-s_n'(c_n))}+\left(\frac{s_n(N)}{N(1-s_n'(c_n))}\right)^2+\dots.
 \] We will show that $\lim_{n \to \infty}s_n'(c_n)=0$ and thus the result will follow.  
 
 Consider $z^{-n}\CM_n(z)$. Each entry in this matrix is of the form $z^{-n}(w_{i,n},w_{j,n})_z$. Let $\alpha_{i,j}(z)=\lim_{n \to \infty}z^{-n}(w_{i,n},w_{j,n})_z$. The form of $\alpha_{i,j}(z)$ is obtained in Lemma~\ref{lemma:lim} for $z>1$. Therefore, 
 \[
 \lim_{n \to \infty}\det(z^{-n}\CM_n(z))=\lim_{n \to \infty}z^{-kn}\CD_n(z)=\det(\alpha(z)),
 \] where $\CD_n(z)$ is the determinant of $\CM_n(z)$ and $\alpha(z)=[\alpha_{j,i}(z)]_{i,j}$. Choose the largest interval $[z_0,N]$ for some $z_0>1$ such that $\det(\alpha(z))\ne 0$ for $z\in[z_0,N]$.   
 
 Let $\mathcal{S}_n(z)$ denote the sum of entries of the adjoint matrix of $\CM_n(z)$. Assume that $\CD_n(z)=\sum_{j=0}^{k(n-1)}a_{j,n}z^j$ and $\mathcal{S}_n(z)=\sum_{j=0}^{(k-1)(n-1)}b_{j,n}z^j$. Denote $a_n=\max_j\{|a_{j,n}|\}$ and $b_n=\max_j\{|b_{j,n}|\}$. Here
 \begin{eqnarray*}
 |s_n'(z)|&\le& \frac{|\CD_n(z)S_n'(z)|+|\CD_n'(z)\mathcal{S}_n(z)|}{(\CD_n(z))^2}< kna_nb_n\frac{z^{(n-1)(2k-1)+1}}{(z-1)^2(\CD_n(z))^2},
 \end{eqnarray*}
 since $|\CD_n(z)|< a_n\frac{z^{k(n-1)+1}}{z-1}, |S_n(z)|<b_n\frac{z^{(k-1)(n-1)+1}}{z-1},|\CD_n'(z)|< a_nk(n-1)\frac{z^{k(n-1)}}{z-1}$, and $|S_n'(z)|<b_n(k-1)(n-1)\frac{z^{(k-1)(n-1)}}{z-1}$.

Expanding the determinant via the Leibniz formula, we have
\[
\CD_n(z) = \sum_{\sigma \in S_k} \text{sign}(\sigma) \prod_{i=1}^k \CM_{n_{i,\sigma(i)}}(z).
\]
Each product involves $k$ polynomials of degree at most $n-1$ with coefficients 0 or 1, so the coefficient of any polynomial in such a product is bounded by $O((n-1)^k)$, and there are $k!$ such permutations. Summing over all permutations gives $a_n=O(k! n^k)$. Using a similar argument for the determinant of the submatrices, we get $b_n=O(k!  k  n^{k-1})$.
Hence, there exists $M_0$ such that $a_nb_n\le M_0n^{2k-1}$. 

Multiplying the last term in the above inequality by $\frac{z^{2kn}}{z^{2kn}}$ and using $\lim_{n \to \infty}z^{-kn}\CD_n(z)=\det(\alpha(z))$, we get that 
\[
|s_n'(z)|\le M(z)\frac{n^{2k}}{z^n}\rightarrow 0
\] as $n$ tends to infinity for $z\in[z_0,N]$, where $M(z)=\frac{kM_0}{(\det(\alpha(z)))^2}\frac{z^{2-2k}}{(z-1)^2}$. By~\cref{thm:multi_SFTLinear}, $\lim_{n\to\infty}\lambda_n=N$ and thus $c_n\in[z_0,N]$ for $n$ large enough.
Therefore, $\lim_{n \to \infty}s_n'(c_n)=0$.
\end{proof}

\begin{theorem}\label{thm:Exp_decay}
Let $\alpha(N)$ be invertible. 
    Then, \[\lambda(x_1,x_2,\dots,x_k)=\frac{T}{N}\] where $T=T(x_1,x_2,\dots,x_k)$ is the sum of entries of $\alpha^{-1}(N)$.
\end{theorem}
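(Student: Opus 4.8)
The plan is to read off the closed form $T/N$ directly from the series representation \eqref{eq:local_erate} established in the previous lemma, by first determining the exact first-order size of $s_n(N)$ and then checking that only the leading term of that series survives in the limit. Everything hinges on promoting the entrywise asymptotics $z^{-n}\CM_n(z)\to\alpha(z)$ (from \cref{lemma:lim}) to an asymptotic statement about the \emph{inverse} matrix, which is exactly what the sum-of-entries functional $s_n$ sees.

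First I would pin down $s_n(N)$. Write $\CM_n(z)=z^{n}\bigl(\alpha(z)+E_n(z)\bigr)$, where $E_n(z)\to 0$ entrywise by \cref{lemma:lim}. Since $\alpha(N)$ is invertible by hypothesis (and this is guaranteed whenever $N>k$ by \cref{lemma:inv_alpha}), continuity of matrix inversion at an invertible matrix yields
\[
N^{n}\CM_n^{-1}(N)=\bigl(\alpha(N)+E_n(N)\bigr)^{-1}\longrightarrow\alpha^{-1}(N)
\]
entrywise. Applying the continuous linear "sum of all entries" functional $M\mapsto \mathbf 1^{\top} M\,\mathbf 1$ (with $\mathbf 1$ the all-ones vector) to both sides gives $N^{n}s_n(N)\to T$, where $T$ is the sum of entries of $\alpha^{-1}(N)$. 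In particular $s_n(N)=N^{-n}\bigl(T+o(1)\bigr)$, and $N^{n}s_n(N)$ is a bounded sequence, say $|N^{n}s_n(N)|\le M$ for all $n$.

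With this in hand the leading term of the series is immediate: $N^{n-1}s_n(N)=\tfrac1N\bigl(N^{n}s_n(N)\bigr)\to T/N$. It then remains to show that the tail $\sum_{j\ge 2}N^{n-j}s_n^{j}(N)$ vanishes as $n\to\infty$. Substituting $s_n(N)=N^{-n}(T+o(1))$, the $j$-th term equals $N^{n}\bigl(N^{-(n+1)}\bigr)^{j}\bigl(N^{n}s_n(N)\bigr)^{j}$, so the tail is dominated by $N^{n}\sum_{j\ge 2}\bigl(N^{-(n+1)}M\bigr)^{j}$. For $n$ large enough that $N^{-(n+1)}M<\tfrac12$, this is a convergent geometric sum of order $N^{-n-2}M^{2}$, hence tends to $0$. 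Combining the two observations, the limit of the series \eqref{eq:local_erate} is exactly $T/N$, which proves the theorem.

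The main obstacle is not a deep one but a matter of careful bookkeeping: one must ensure that entrywise convergence of $z^{-n}\CM_n$ together with invertibility of $\alpha(N)$ genuinely suffices to pass to the inverse (this is precisely where the hypothesis on $\alpha(N)$ is used), and that the interchange of the limit with the infinite sum is legitimate, which the geometric-tail estimate secures. I expect no genuine analytic difficulty beyond these two points.
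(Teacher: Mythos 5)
Your proposal is correct and follows essentially the same route as the paper: both pass from the entrywise limit $z^{-n}\CM_n(z)\to\alpha(z)$ to $N^{n}\CM_n^{-1}(N)\to\alpha^{-1}(N)$ using invertibility of $\alpha(N)$, identify the leading term $N^{n-1}s_n(N)\to T/N$ in \cref{eq:local_erate}, and discard the tail. Your geometric-series estimate for the tail is just a more explicit version of the paper's one-line remark that the remaining terms vanish because $s_n(N)\to 0$.
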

\begin{proof}
As before, let $\alpha_{i,j}(z)=\lim_{n\to\infty}z^{-n}(w_{i,n},w_{j,n})_z$ for all $1\le i,j\le k$ and $\alpha(z)=[\alpha_{j,i}(z)]_{i,j}$. Then $\lim_{n\to\infty}z^{-n}\CM_n(z)=\alpha(z)$ implies $\lim_{n\to\infty}z^{n}\CM_n(z)^{-1}=\alpha^{-1}(z)$. Hence, if $s_n(z)$ denotes the sum of entries of $\CM_n^{-1}(z)$, then
\[
 \lim_{n\to\infty}N^{n-1}s_n(N)= \frac{T}{N}, 
\] where $T$ is the sum of entries of $\alpha^{-1}(N)$.
Since $\lim_{n \to \infty}s_n(N)=0$, we get that the rest of the terms in~\cref{eq:local_erate} go to zero, as $n$ tends to infinity. 
Hence, we obtain,
\[
\lambda(x_1,x_2,\dots,x_k)=\lim_{n\to\infty}N^{n-1}s_n(N)= \frac{T}{N}. 
\] 
\end{proof}
\begin{remark}
    By~\cref{lemma:inv_alpha}, $\alpha(N)$ is invertible whenever $k<N$. 
\end{remark}
Note that all the possible cases regarding the limit $\alpha_{i,j}(N)$ are already discussed in Lemma~\ref{lemma:lim}. Also, when the orbits of $x_1,x_2,\dots,x_k$ do not intersect with each other, then $\alpha(z)$ is a diagonal matrix and hence the limit is the average of the individual limit that we obtain by forbidding single words. 

\subsubsection{Application to ergodic theory} Let $\CA=\{1,\dots,N\}$ and $X=\CA^\N$ be a one-sided full shift. For $w\in\CL(X)$, let $C_w=\{x\in X\mid x \text{ starts with } w\}$ be the \emph{cylinder based at $w$}. The cylinders based at all possible words in $\CL(X)$ form a basis for the pro-discrete topology on $X$. 
Moreover, $X$ is equipped with uniform invariant probability measure $\mu$ where $\mu(C_w)=\frac{1}{N^k}$ where $|w|=k$. For a given collection $\CK$ of finite allowed words, each of length $k$, we define a \emph{hole} $\bigcup_{w\in\CK}C_w$. Note that the perturbation $X_{\CK}$ is the collection of all sequences in $X$ that do not enter the hole $\bigcup_{w\in\CK}C_w$ by the shift map. Define,  
\[
\Omega_n(\CK)=\{x\in X\mid S^i(x)\notin\bigcup_{w\in\CK}C_w, \ 0\le i\le n-1\}
\] and define $\CA^\N\setminus\Omega_n(\CK)$ to be the \emph{survival set}. The \emph{escape rate into $\bigcup_{w\in\CK}C_w$ by $S$} is defined as 
\[
\rho(\CK)=-\lim_{n\to\infty}\frac{1}{n}\mu(X\setminus\Omega_n(K)).
\] 
The escape rate is an important concept in open dynamics where a hole is introduced in the state space and we look at the average rate at which the orbits escape into this hole. In our setup, the quantity defined above is the escape rate where the hole is given by $\bigcup_{w\in\CK}C_w$. 
In~\cite{BY}, it was proved that $\rho(K)=h(X)-h(X_{\CK})$. 

It is also interesting to see how the escape rate decays when the size of the hole converges to zero~\cite{FP12}.
Consider a shrinking sequence of (possibly disconnected) holes $\left\{H_n=\bigcup_{w\in\CK_n}C_w\right\}_{n\ge 1},$ where $|w|=n$ for all $w\in\CK_n$, $\#\CK_n=k$ for $n\ge 1$ and $H_1\supseteq H_2\supseteq\dots$. As $n\to\infty$, these holes converge to a finite union of points, say $x_1,\dots,x_k\in X$. 

We define the \emph{local escape rate} into $x_1,\dots,x_k$ as 
\[
\rho(x_1,\dots,x_k)=\lim_{n\to\infty}\dfrac{\rho(\CK_n)}{\mu(\CK_n)}=\lim_{n\to\infty}\dfrac{h(X)-h(X_{\CK_n})}{kN^{-n}},
\] where $\mu(\CK_n)=\mu(\bigcup_{w\in\CK_n}C_w).$ We have the following result. 
\begin{corollary}
    With notations as before, whenever $k<N$, we have, $\rho(x_1,\dots,x_k)=\dfrac{T}{kN}.$
\end{corollary}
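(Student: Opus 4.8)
The plan is to recognize that the local escape rate $\rho(x_1,\dots,x_k)$ is, up to an explicit multiplicative constant, exactly the quantity $\lambda(x_1,\dots,x_k)$ already computed in~\cref{thm:Exp_decay}; thus the corollary should follow by assembling that theorem with the escape-rate identity $\rho(\CK)=h(X)-h(X_{\CK})$ from~\cite{BY} and a short measure computation. First I would substitute the identity of~\cite{BY} into the definition of the local escape rate to obtain
\[
\rho(x_1,\dots,x_k)=\lim_{n\to\infty}\frac{\rho(\CK_n)}{\mu(\CK_n)}=\lim_{n\to\infty}\frac{h(X)-h(X_{\CK_n})}{\mu(\CK_n)},
\]
so that the whole task reduces to evaluating $\mu(\CK_n)$ and matching the resulting limit against $\lambda(x_1,\dots,x_k)=\lim_{n\to\infty}\bigl(h(X)-h(X_{\CK_n})\bigr)/N^{-n}$.

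The second step is the measure computation. Since $\CK_n$ consists of $k$ distinct words, each of length $n$, the cylinders $\{C_w\}_{w\in\CK_n}$ are pairwise disjoint — two distinct words of equal length cannot both be a prefix of the same sequence — and each has measure $N^{-n}$ under the uniform measure $\mu$. Hence $\mu(\CK_n)=\sum_{w\in\CK_n}\mu(C_w)=kN^{-n}$. Plugging this in yields
\[
\rho(x_1,\dots,x_k)=\frac{1}{k}\lim_{n\to\infty}\frac{h(X)-h(X_{\CK_n})}{N^{-n}}=\frac{1}{k}\,\lambda(x_1,\dots,x_k).
\]

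The final step is to invoke~\cref{thm:Exp_decay}. Its hypothesis requires $\alpha(N)$ to be invertible, and this is precisely where the standing assumption $k<N$ is used: by the Remark following the theorem — equivalently~\cref{lemma:inv_alpha} applied with $\#\CK=k$ and $z=N>k$ — the matrix $\alpha(N)$ is indeed invertible, so $\lambda(x_1,\dots,x_k)=T/N$ with $T$ the sum of entries of $\alpha^{-1}(N)$. Combining the two displays gives $\rho(x_1,\dots,x_k)=T/(kN)$, as claimed.

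I do not expect any genuine obstacle here: all the analytic content already resides in~\cref{thm:Exp_decay} and~\cref{lemma:inv_alpha}. The only points demanding care are the two bookkeeping items — verifying $\mu(\CK_n)=kN^{-n}$ (which hinges on the words of $\CK_n$ being distinct, so that the cylinders are disjoint and the measures add) and confirming the invertibility of $\alpha(N)$ (which is exactly the content of the $k<N$ hypothesis). Neither requires new estimates, so the corollary is essentially an application and normalization of the preceding theorem.
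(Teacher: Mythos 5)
Your proposal is correct and follows exactly the route the paper intends: the paper leaves this corollary without an explicit proof because, as you observe, it is immediate from the definition of the local escape rate (where $\mu(\CK_n)=kN^{-n}$ is already built in), the identity $\rho(\CK)=h(X)-h(X_{\CK})$ from~\cite{BY}, \cref{thm:Exp_decay}, and the remark that \cref{lemma:inv_alpha} guarantees invertibility of $\alpha(N)$ when $k<N$. Your explicit verification that the $k$ cylinders are pairwise disjoint is a harmless piece of bookkeeping the paper takes for granted.
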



\bibliographystyle{abbrv} 
\bibliography{mybib}
\end{document}